 \journalname{Applied Mathematics and Optimization}
\begin{document}

\begin{center} {\bf \large On   Approximate and Weak Correlated   
Equilibria in  Constrained Discounted Stochastic Games }
\end{center}
\begin{center}{\bf    Anna Ja\'skiewicz$^{a}$, Andrzej S. Nowak$^{b}$ } \end{center}
\noindent 
\noindent$^{a}$Faculty of Pure and Applied Mathematics, Wroc{\l}aw University of Science and Technology,  Wroc{\l}aw, Poland \\
{\footnotesize {\it email: anna.jaskiewicz@pwr.edu.pl}}\\
\noindent$^{b}$Faculty of Mathematics, Computer Science, 
and Econometrics,  University of Zielona G\'ora,  Zielona G\'ora, Poland \\
{\footnotesize {\it email: a.nowak@wmie.uz.zgora.pl}}\\

\begin{center}
\today
\end{center}
 
\noindent{\bf Abstract.} 
In this paper, we consider  constrained discounted  stochastic  games with a countably generated  state space
and norm continuous transition probability having a density function. 
We prove existence of approximate stationary equilibria and stationary weak correlated equilibria. 
Our results imply the existence of stationary Nash equilibrium in $ARAT$ stochastic games.\\

 \noindent{\bf Keywords.} 
constrained discounted stochastic  game;  approximate equilibrium; Nash equilibrium;   
correlated equilibrium\\

\noindent{\bf  MSC (2020).} Primary: 91A15; 91A10; 60J10; Secondary: 90C40; 60J20\\

Running head: Approximate and Weak Correlated Equilibria 

\section{Introduction}

Constrained Markov decision processes and stochastic games have numerous applications 
in operations research, economics, computer sciences, consult with \cite{a,as,jn3,piu} 
and references cited therein.  They
arise in situations, in which a controller or player has  many objectives. For example,    
when she or he  wants to minimise one type of cost while keeping other
costs lower than some given bounds. Constrained stochastic games with finite state and 
action spaces were first studied by Altman and Shwartz \cite{as}. Their work was extended 
to some classes of games with countable state spaces in \cite{ahl,zhg} by finite state 
approximations. A more direct approach based on properties of measures induced by strategies 
and occupation measures was presented in \cite{jn3}.

In this paper, we study discounted constrained stochastic games  with a general state space 
and the transition probability having a density function. Such two-person games with additive rewards 
and additive transition structure ($ARAT$ games) were recently studied by 
Dufour and Prieto-Rumeau \cite{dpr2}. They established the existence of stationary 
Nash equilibria generalising the result of Himmelberg et al. \cite{hprv} proved 
for unconstrained games. Moreover, their theorem also holds for $N$-person$ ARAT$   games satisfying 
the standard Slater condition. As shown in a highly non-trivial example by Levy \cite{lm}, 
the games under consideration in this paper may have no stationary Nash equilibrium 
in the unconstrained case. It can be seen, that this example applies to the constrained case as well. 
Thus, results on approximate equilibria as in \cite{n1,ww}
became more valuable. They are stated for the unconstrained case, and in this paper 
we extend the main result from \cite{n1} to a class of constrained games. 
In this way, we establish the existence of approximate stationary equilbria for 
discounted stochastic games with constraints and general state spaces.
It should be noted that the existence of stationary equilibria in discounted unconstrained
games was proved only in some special cases, for instance, for $ARAT$ games \cite{hprv} or games 
with transitions having no conditional atoms \cite{hsun}.
For a survey of results on stationary  and non-stationary Nash equilibria the reader 
is referred to \cite{jn1}. 

The other group of papers comprise the ones on stationary equilibria with public 
signals, see \cite{dgmm,hrr,nr}.  Such solutions can be 
viewed as special communication or correlated equilibria widely discussed in dynamic frameworks
(repeated, stochastic or extensive form games) in \cite{f1,f2,my,s,sv}. 
They were inspired by the seminal papers of Aumann \cite{au1,au2}.
A weaker version of correlated equilibrium was proposed by Moulin and Vial \cite{mv}.  
According to their approach 
a correlated strategy in a finite (bimatrix) game is a probability 
distribution $\nu$ on the set of pure strategy pairs. Every player has 
to decide whether to accept $\nu$  or to use his or her individual strategy. 
If player $i$ uses an individual strategy and player $j\not=i$ obeys $\nu$, 
then a pure action for player $j$ is selected by the marginal distribution of $\nu$
on his/her pure actions. Then $\nu$ is an equilibrium, if no unilateral deviations 
from it are profitable. This solution is called a weak correlated equilibrium 
or a correlated equilibrium with no exchange of information \cite{mv}. 
In contrast to Aumann's approach, the players who accepted $\nu$ cannot change 
actions after using the lottery $\nu$. The solution proposed by Moulin and Vial \cite{mv} 
has an interesting property. Namely, the authors constructed a bimatrix game, in which the equilibrium payoffs 
in their equilibrium concept strictly dominate in the Pareto sense the payoffs in Aumann's equilibrium, see \cite{m,mv}.

In \cite{n2} the concept of Moulin and Vial is used to an unconstrained discounted 
stochastic game with a general state space. However, as shown by Solan and Vieille, \cite{sv},
the notion of a  weak correlated equilibrium can be also regarded as a special case  of a correlation scheme. 
In this paper, we extend the result from \cite{n2} to a large class of   
discounted stochastic games with so-called integral constraints. We apply our recent 
result from  \cite{jn3} for games with discrete state spaces and use an approximation technique. 
A stationary weak correlated equilibrium is obtained as a limit (in the weak* sense) 
of approximate equilibria.
Our result generalises the main theorem of Dufour and Prieto-Rumeau \cite{dpr2} given for 
$ARAT$ games, if the action sets for players do not depend on the state. 
We wish to emphasise that the considerations of other classes of correlated equilibria in constrained stochastic 
games (like equilibria with public signals) seem to be very challenging  for many reasons. Firstly,
the integral constraints are difficult to apply. Secondly, the usual methods 
from dynamic programming (Bellman's principle) or backward and forward induction  
used in unconstrained cases are not applicable. Perhaps further possible results can be obtained  
for other correlated equilibria but under different type of constraints.

The paper is organised as follows. The model and main results on equilibria are contained in Section \ref{s2}.
Section \ref{s3} presents the approximation technique and the proofs of two main theorems.
Section \ref{s4} is devoted to the proof
on the existence of a weak correlated equilibrium and a discussion on our assumptions.  
In Section \ref{s5}, we show that
the example given in \cite{lm} can be used to show that discounted
constrained stochastic games studied in this paper may not have
stationary Nash equilibria.  Section \ref{s6} discusses a useful transformation that shows how to easily 
extend our results formulated for bounded cost functions to unbounded ones. 
In Appendix (Section \ref{s7}) we give a crucial lemma on a replacement one strategy by another. 
It is used in the proofs of our main theorems 
on equilibria in constrained stochastic games. 
 
 \section{ The   game model  and main results}
\label{s2}

In this section, we describe constrained discounted stochastic games 
with general state space and our basic assumptions. 
We provide  our main results in three cases. Firstly, we give a theorem 
on the existence of a stationary   approximate equilibrium assuming that the players 
play the game independently.  Secondly, we drop the constraints and give a theorem
 on the existence of  a stationary  $\varepsilon$-equilibrium  
for every initial state, extending the main result in \cite{n1}.   
Finally, we show that the constrained  stochastic games under consideration possess stationary 
weak correlated equilibria introduced in the static (bimatrix) case by Moulin and Vial \cite{mv}.

\subsection{\bf  Approximate Nash equilibria in constrained discounted stochastic games} 

The non-zero-sum {\it constrained stochastic   game}  $(CSG)$ is described by the following objects: 
\begin{itemize}
\item ${\cal N}=\{1,2,...,N\}$  is the {\it set of players}. 
\item $X$ is a  {\it state space} endowed with a countably
generated $\sigma$-algebra $\cal F.$
\item $A_i$ is a compact metric {\it action space} for
player $i\in {\cal N} $  endowed with the Borel $\sigma$-algebra. We put
$$ A:=\prod_{j\in {\cal N}} A_j\quad \mbox{and}\quad
A_{-i}:=\prod_{j\in {\cal N}\setminus\{ i\}}  A_j,$$
$$  
 \mathbb{K}_i:= \{(x,a_i): x\in X,\ a_i\in A_i  \}, \quad 
 \mathbb{K}:= \{(x,\pmb{a}): x\in X,\ \pmb{ a}=(a_1,...,a_n)\in A \}.$$
\item The real-valued functions
$c_i^\ell:\mathbb{K}\to \mathbb{R},$ 
where  $i\in {\cal N},$  $\ell \in {\cal L}_0={\cal L}\cup\{0\} $ 
with  ${\cal L}=\{1,...,L\},$   are product  measurable. Here,
$c_i^0$ is the {\it cost-per-stage  function} for player  $i\in {\cal N},$ 
and for each  $\ell\in {\cal L},$   $c_i^\ell$ is a function used in the definition 
of the $\ell$-th {\it constraint} for this player.  It is assumed that there exists $b>0$ such that
$$|c^\ell_i(x,\pmb{a})|\le b,\quad\mbox{for all}\quad i\in {\cal N},\ 
\ell \in {\cal L}_0,\ (x,\pmb{a})\in \mathbb{K}.$$

\item $p(dy|x,\pmb{a})$ is the transition probability from $x$ to $y\in X,$
when the players choose a profile $\pmb{ a}=(a_1,a_2,...,a_N)$ of actions in $ A.$ 
\item $\eta$ is  the {\it initial state distribution}.
\item $\alpha\in (0,1)$ is the {\it discount factor}.
\item $\kappa_i^\ell$ are constraint constants, $i\in {\cal N},$ $\ell \in {\cal L}.$ 
\end{itemize}
Let $\mathbb{N}=\{1,2,...\} .$ Define $H^1=X$ and $H^{t+1}=   \mathbb{K}\times H^{t}$  for $t\in \mathbb{N}.$ 
An element  $h^t=(x^1,\pmb{ a}^1,\ldots,x^t)$ of $H^t$ represents a history of the game up to the $t$-th
period, where  $\pmb{ a}^k=(a^k_1,\ldots,a^k_N)$ is the profile of actions chosen by the players in the state $x^k$ 
on the $k$-th stage of the game,  $h^1=x^1.$

Strategies for the players are defined in the usual way. 
A {\it strategy} for player $i\in{\cal N}$ is a sequence $\pi_i=(\pi_{i}^t)_{t\in  \mathbb{N}},$ 
where each $\pi_{i}^t$  is a transition probability from $H^t$ to $A_i.$  
By $\Pi_i$ we denote the {\it set of all strategies} for player $i.$
Let  $\Phi_i$ be the set of transition probabilities from $X$ to $A_i.$  
A {\it stationary strategy} for player $i$ is a constant
sequence $ (\varphi_{i}^t)_{t\in  \mathbb{N}},$ where $\varphi_i^t=\varphi_i$ 
for all $t\in\mathbb{N}$ and some $\varphi_i\in\Phi_i.$ 
Furthermore, we shall identify  a stationary strategy for player $i$  
with the  constant element $\varphi_i$  of the sequence. 
Thus, the {\it set of all    stationary } strategies of player $i$  is also denoted by $\Phi_i.$
We define
$$\Pi = \prod_{i=1}^N \Pi_i \quad\mbox{and}\quad
\Phi = \prod_{i=1}^N \Phi_i.$$
Hence, $\Pi$ ($\Phi$) is the set of all (stationary) multi-strategies of the players.

Let  $H^\infty = \mathbb{K}\times \mathbb{K}\times \cdots$
be the space of all infinite histories of the game endowed with
the  product $\sigma$-algebra. For any  multi-strategy  $\pmb{ \pi}\in\Pi$,
a unique probability measure $\mathbb{P}_\eta^{\pmb{\pi}}$  and
 a stochastic process $(x^t,\pmb{a}^t)_{t\in \mathbb{N}}$  
are defined on $H^\infty$ in a canonical way, see 
the Ionescu-Tulcea theorem, e.g., Proposition V.1.1 in \cite{n}. 
The measure $\mathbb{P}_\eta^{\pmb{\pi}}$  is induced by $\pmb{\pi},$ 
the transition probability $p$ and the initial distribution $\eta.$  
The expectation operator with respect to $\mathbb{P}_\eta^{\pmb{\pi}}$ 
is denoted by $\mathbb{E}_\eta^{\pmb{\pi}}.$ 

Let $\pmb{\pi}\in\Pi$ be any multi-strategy. For each
$i\in {\cal N}$ and $\ell \in {\cal L}_0$,  the {\it discounted cost functionals} are defined as
$$
J_i^\ell(\pmb{\pi}) = (1-\alpha)\mathbb{E}^{\pmb{\pi}}_\eta\left[\sum_{t=1}^\infty 
\alpha^{t-1} c_i^\ell(x^t,\pmb{a}^t) \right].
$$
 
We assume that $J^0_i(\pmb{\pi})$ is the expected discounted cost of player $i\in {\cal N}$,  
who wishes to  minimise it over $\pi_i \in\Pi_i$ in such a way  that
the following constraints are satisfied
$$
J^\ell_i(\pmb{\pi}) \le \kappa^\ell_i \quad\mbox{for all}\quad
\ell\in {\cal L}.
$$
A multi-strategy $\pmb{\pi}$ is {\it feasible}, if the above inequality
holds for each $i\in {\cal N},$  $\ell\in {\cal L}.$ We denote by $\Delta$ the 
set of all feasible multi-strategies in the $CSG.$

As usual, for any $\pmb{\pi}\in\Pi$,  we denote by  $\pmb{\pi_{-i}}$
the multi-strategy of all players but player $i,$ that is,
$ \pmb{\pi_{-1}} =(\pi_2,...,\pi_N),$  $ \pmb{\pi_{-N}} =(\pi_1,...,\pi_{N-1}),$
and for $i\in {\cal N}\setminus \{1,N\},$  
 $$\pmb{\pi_{-i}}
=(\pi_1,\ldots,\pi_{i-1},\pi_{i+1},\ldots,\pi_N).$$
We identify $[\pmb{\pi_{-i}},\pi_i]$ with $\pmb{\pi}.$ 
For each $\pmb{\pi}\in\Pi$, we define the set of feasible 
strategies for player $i$ with $\pmb{\pi_{-i}}$ as
$$\Delta_i(\pmb{\pi_{-i}})=\{\pi_i\in\Pi_i: \ J^\ell_i(\pmb{\pi}) 
=  J^\ell_i([\pmb{\pi_{-i}},\pi_i]) 
\le \kappa^\ell_i \quad\mbox{for all}\quad
\ell\in {\cal L}\}.$$

Let $\pmb{\pi}=(\pi_1,\pi_2,...,\pi_N) \in\Pi$ and $\sigma_i\in\Pi_i.$   
By $[\pmb{\pi_{-i}},\sigma_i]$ 
we denote the multi-strategy, 
where player $i$ uses $\sigma_i$ and every player $j\not= i$ uses $\pi_j.$

\begin{definition}
\label{def1}
A multi-strategy $\pmb{\pi}^*\in\Pi$ is an  {\it approximate equilibrium} 
in the $CSG$ (for given $\varepsilon >0$), if 
for every $i\in {\cal N}$ and $\ell\in {\cal L},$    
\begin{equation}
\label{def1e1}
J^\ell_i(\pmb{\pi}^*) \le \kappa^\ell_i +\varepsilon,
\end{equation}
 and for every $i\in {\cal N},$ 
\begin{equation}
\label{def1e2}
J^0_i(\pmb{\pi}^*)-\varepsilon \le\inf_{\sigma_i \in\Delta_i(\pmb{\pi_{-i}^*})} J^0_i([\pmb{\pi_{-i}^*},\sigma_i]).
\end{equation} 
A multi-strategy $\pmb{\pi}^*\in\Pi$ is an  $\varepsilon$-{\it equilibrium} 
in the $CSG$ (for given $\varepsilon \ge 0$),  if  
(\ref{def1e2}) holds  and $J^\ell_i(\pmb{\pi}^*) \le \kappa^\ell_i $ for every $i\in {\cal N}$ and $\ell\in {\cal L}.$   
A $0$-equilibrium is called a Nash equilibrium in the $CSG.$
\end{definition}

Note that,   every $\varepsilon$-equilibrium  is approximate, but not vice versa. 
For small $\varepsilon>0,$ condition (\ref{def1e1}) allows for a slight violation of the feasibility of  $\pmb{\pi}^*$. 
Further comments on this condition  the reader will find in
Remark \ref{appreq}.\\
 
We now formulate our basic assumptions.\\

\noindent{\bf  Assumption A1} \\  
 The functions $c^\ell_i(x,\cdot)$ are continuous on $A $ 
for all $x\in X,$ $i\in {\cal N}$ and $\ell\in {\cal L}_0.$
\\

\noindent{\bf  Assumption A2} \\  
The transition probability $p$ is of the form
$$p(B|x,\pmb{a})=\int_B \delta(x,y,\pmb{a})\mu(dy), \quad B\in {\cal F},
$$
where $\mu$ is a probability measure on $\cal F$ and $\delta$ is a product measurable 
non-negative  (density) function such that, if $\pmb{a}^n \to \pmb{a}$ as  $n\to \infty,$ then 
$$
\int_X|\delta(x,y,\pmb{a}^n) - \delta(x,y,\pmb{a})|\mu(dy) \to 0.
$$\\

This assumption means the norm continuity of $p$ with respect to action profiles.\\

\noindent{\bf  Assumption A3} \\   For each stationary multi-strategy
$\pmb{\varphi}\in \Phi$ and for each player $i\in {\cal N},$ there exists
$\pi_i\in \Pi_i$ such that 
$$
J^\ell_i([\pmb{\varphi_{-i}},\pi_i]) \le \kappa^\ell_i  \quad \mbox{for all}\quad \ell \in {\cal L}.
$$
\\

Assumption  {\bf A3} is standard in the theory of constrained decision processes and 
stochastic games \cite{a,as,dpr2,jn3}.

\begin{remark}\label{rS}
From Assumption {\bf A3},  Lemma 2.3 in \cite{dpr2} and Lemma 24 in
 \cite{piu} it follows that the strategy $\pi_i\in \Pi_i$ can be replaced a stationary strategy 
 $\sigma_i\in\Phi_i$ such that 
 $$
J^\ell_i([\pmb{\varphi_{-i}},\pi_i])= 
J^\ell_i([\pmb{\varphi_{-i}},\sigma_i])   \quad \mbox{for all}\quad \ell \in {\cal L}.$$
The proof of Lemma 24 in \cite{piu} on the equivalence of these
strategies is formulated  for  models with Borel state spaces.  However, it is also valid in our framework
(see pages 307-309 in \cite{piu}) with the exception that we need an appropriate
disintegration result. In this matter,  consult with Lemma 2.3 in \cite{dpr2} or Theorem 3.2 in \cite{fg}.
\end{remark}

We are ready to state our first  main result.  

\begin{theorem}
\label{thm1}
 Assume {\bf A1}, {\bf A2} and {\bf A3}. Then, for each $\varepsilon >0,$ the $CSG$ 
 possesses a stationary approximate  equilibrium.
\end{theorem}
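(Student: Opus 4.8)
The plan is to reduce the general-state game to a sequence of discrete-state games, invoke the existence result for the discrete case, and pass to a weak* limit. First I would exploit that $\cal F$ is countably generated to fix an increasing sequence of finite partitions $\{X^n_1,\dots,X^n_{k_n}\}$ whose generated $\sigma$-algebras $\mathcal F_n$ increase to $\cal F$. Using the density representation in Assumption {\bf A2}, I would build for each $n$ an auxiliary game $G_n$ with finite state space $\{1,\dots,k_n\}$, whose transition law and cost functions are obtained by averaging $\delta(x,y,\pmb{a})$ and the $c^\ell_i$ over the partition cells against $\mu$. The point of the norm continuity in {\bf A2} is that, together with compactness of $A$ and the $L^1(\mu)$-density of $\mathcal F_n$-simple functions, it yields an approximation of the density that is uniform in $\pmb{a}$; this is what makes the cost functionals of $G_n$ converge to those of the original game. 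I would then check that {\bf A1} and the Slater-type condition {\bf A3} are inherited by $G_n$, up to an arbitrarily small relaxation of the constants $\kappa^\ell_i$.

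Second, I would apply the result of \cite{jn3} to each $G_n$ to obtain an exact stationary Nash equilibrium $\pmb{\varphi}^n=(\varphi^n_1,\dots,\varphi^n_N)$, and lift each $\varphi^n_i$ to a stationary strategy in the original game by keeping it constant on the cells of the $n$-th partition. The set $\Phi_i$ of transition probabilities from $X$ to $A_i$ is convex and weak*-compact, so $\Phi=\prod_i\Phi_i$ is weak*-compact and I may extract a subsequence with $\pmb{\varphi}^n\to\pmb{\varphi}^*\in\Phi$. The key analytic input is that, under {\bf A1} and {\bf A2}, every functional $J^\ell_i$ is continuous on $\Phi$ for this topology — again the density representation is essential, since it turns the one-step expectations into integrals that depend continuously on the strategies. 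Combining this continuity with the convergence of the functionals of $G_n$ established in the first step gives $J^\ell_i(\pmb{\varphi}^*)\le\kappa^\ell_i+\varepsilon$ for every $i$ and $\ell$, which is (\ref{def1e1}).

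Third, I would establish the deviation inequality (\ref{def1e2}). Fixing player $i$ and the limiting profile $\pmb{\varphi_{-i}^*}$, the deviation problem is a constrained discounted Markov decision process; by Remark \ref{rS} its feasible deviations may be taken stationary without changing the constraint values, so the relevant infimum is over a weak*-compact set. In $G_n$ the equilibrium $\pmb{\varphi}^n$ satisfies the analogous inequality with equality, and I would transport this to the limit using the replacement lemma of the Appendix: given any near-optimal feasible stationary deviation $\sigma_i\in\Delta_i(\pmb{\varphi_{-i}^*})$ against $\pmb{\varphi_{-i}^*}$, the lemma produces a strategy measurable with respect to the $n$-th coarse structure that is feasible up to $\varepsilon$ and has essentially the same cost against $\pmb{\varphi_{-i}^n}$, and conversely. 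Passing to the limit along the subsequence then yields (\ref{def1e2}).

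The step I expect to be the main obstacle is the third one. The difficulty is that the feasible deviation set $\Delta_i(\pmb{\varphi_{-i}^*})$ is itself a correspondence of the opponents' strategies, so as $n$ varies both the objective $J^0_i$ and the constraint region move; transferring the exact optimality of $\pmb{\varphi}^n$ in $G_n$ into the approximate optimality (\ref{def1e2}) for $\pmb{\varphi}^*$ requires an appropriate semicontinuity of the constrained value together with stability of the feasible sets. This is precisely where the Slater condition {\bf A3} — guaranteeing that the constraints can be satisfied with slack, so that small perturbations do not empty the feasible set — and the replacement lemma do the essential work, and where the relaxation by $\varepsilon$ in (\ref{def1e1}) becomes unavoidable.
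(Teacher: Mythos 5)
Your overall reduction to discrete-state games via the countably generated $\sigma$-algebra and Assumption \textbf{A2} is in the spirit of the paper, but the architecture of your argument contains a genuine gap: you obtain exact equilibria $\pmb{\varphi}^n$ of the discrete games, extract a weak* convergent subsequence $\pmb{\varphi}^n\to\pmb{\varphi}^*$ in $\Phi$, and then invoke continuity of the functionals $J^\ell_i$ on $\Phi$ in the weak* topology to transfer the equilibrium properties to the limit. That continuity is false. The functionals $J^\ell_i$ are only \emph{separately} continuous in the individual strategies; joint continuity on the product $\widehat{\Psi}^\vartheta$ fails, as the paper itself records in Remark \ref{r8} by appeal to the Rademacher-function example of \cite{ekm}. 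This is not a technicality: it is precisely the obstruction that forces the paper to settle for a \emph{correlated} equilibrium in Theorem \ref{thm3} when a limit in $n$ is taken (the weak* limit of the product measures $\psi^n_1\otimes\cdots\otimes\psi^n_N$ need not be a product measure, and the costs converge only along the correlated objects, cf.\ Lemma \ref{l2}). Your third step, which additionally needs stability of the moving feasible sets $\Delta_i(\pmb{\varphi^n_{-i}})$ under this limit, inherits the same defect.

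The fix is that no limit passage is needed for Theorem \ref{thm1}. For the given $\varepsilon$ one fixes a single $\gamma$ with $\epsilon(\gamma)<\varepsilon/2$, builds one countable measurable partition on which both the costs and the densities are \emph{uniformly} within $\gamma$ of piecewise constant versions (this uses separability of $\{\delta(x,\cdot,\cdot):x\in X\}$ in ${\cal C}(A,{\cal L}^1)$ to select representative states, not averaging over an increasing filtration — averaging gives only $L^1$-type convergence, whereas Lemma \ref{l1} needs a bound uniform in $x$), verifies the Slater condition for the countable-state game with the relaxed constants $\kappa^\ell_i+\varepsilon/2$ via Remark \ref{rS} and Lemma \ref{A1}, applies \cite{jn3} once, and shows directly that the lifted piecewise constant equilibrium is itself the desired approximate equilibrium, transporting feasible deviations back and forth with Remark \ref{rS}, Lemma \ref{A1} and Lemma \ref{l1}. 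Your instinct that the relaxation by $\varepsilon$ in (\ref{def1e1}) is unavoidable and that the replacement lemma carries the weight of the deviation argument is correct; it is the weak* limit, and the continuity claim supporting it, that must be removed.
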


\begin{remark}
\label{appreq}
The proof of this result is given in Section \ref{s3}. 
We prove that a  stationary  approximate  equilibrium for given $\varepsilon >0$
consists of strategies that are  piecewise constant functions of the state variable. 
We observe that, under assumptions of Theorem \ref{thm1}, 
condition (\ref{def1e1}) with $\varepsilon=0$ need not be satisfied by piecewise constant stationary multi-strategies.
Therefore, the existence of an $\varepsilon$-equilibrium  in the $CSG$ is an open issue.
We would like to emphasise that Theorem \ref{thm1} is crucial in our proof of
Theorem \ref{thm3} on weak correlated equilibria, where we apply an asymptotic approach when $\varepsilon \to 0.$ 
\end{remark}

\begin{remark}
\label{r1}
The only result in the literature on the existence 
of stationary Nash equilibria in $CSGs$ with general state space 
was given by Dufour and Prieto-Rumeau \cite{dpr2}. 
It concerns so-called discounted  additive rewards and additive transition ($ARAT$) 
stochastic games. 
 In the two-person case the $ARAT$ assumption means that $c_i^\ell(x,a_1,a_2)=
c_{1i}^\ell(x,a_1) + c_{2i}^\ell(x, a_2)$ and $p(\cdot|x,a_1,a_2)= 
p_1(\cdot|x,a_1 )+ p_2(\cdot|x, a_2), $  where $p_1$ and $p_2$ are transition subprobabilities. 
The results  in \cite{dpr2}  are given for two-person games
satisfying the standard Slater condition (Assumption {\bf A3} with strict inequalities). 
However, they can be easily extended by the same methods to $N$-person $ARAT$ stochastic  games.    
A simple adaptation of the counterexample by Levy and McLennan \cite{lm} 
given for unconstrained discounted stochastic 
games implies that stationary Nash equilibria may not exist 
in the constrained stochastic games studied in this paper. 
For more details see Section \ref{s5}.
\end{remark}  

\begin{remark}
\label{rSlatter}
We wish to emphasise that the Slater condition is not needed 
for the establishing an  approximate equilibrium in $CSGs.$
\end{remark}  

\subsection{\bf An update on stationary approximate equilibria in unconstrained discounted stochastic games}

In this subsection, we drop the constraints.
By the Ionescu-Tulcea theorem \cite{n}, any multi-strategy
$\pmb{\pi}\in \Pi$ and any initial state $x\in X,$ 
induce  a unique probability measure 
$\mathbb{P}_x^{\pmb{\pi}}$ on $H^\infty.$  
The expectation operator with respect to 
$\mathbb{P}_x^{\pmb{\pi}}$ is denoted by $\mathbb{E}_x^{\pmb{\pi}}.$ 

The {\it discounted cost } for player $i\in {\cal N}$   is defined as
$$
J_i^0(\pmb{\pi})(x) = (1-\alpha)\mathbb{E}^{\pmb{\pi}}_x\left[\sum_{t=1}^\infty\alpha^{t-1} c_i^0(x^t,\pmb{a}^t) \right].
$$

\begin{definition}
\label{def2}
 Let $\varepsilon \ge 0$ be fixed. 
A multi-strategy $\pmb{\pi}^*\in\Pi$ is an {\it $\varepsilon$-equilibrium}  in the unconstrained discounted stochastic game, if 
$$J^0_i(\pmb{\pi}^*)-\varepsilon \le\inf_{\sigma_i \in\Pi_i} 
J^0_i([\pmb{\pi}^*,\sigma_i])$$ 
for every player $i\in {\cal N}$
 and for all initial states $x\in X.$
A $0$-equilibrium is called a Nash equilibrium.
\end{definition}

\begin{theorem} 
\label{thm2}   Under assumptions {\bf A1}  {\it  and} {\bf A2},  
for any $\varepsilon >0,$
the unconstrained discounted stochastic game has a stationary  $\varepsilon$-equilibrium.
\end{theorem}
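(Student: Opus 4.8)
The plan is to run, in the unconstrained setting, the approximation machinery that underlies Theorem~\ref{thm1}, specialised to the case ${\cal L}=\emptyset$. Dropping the constraints makes Assumption~{\bf A3} vacuous and removes the feasibility slack of~(\ref{def1e1}), so the only quantity to control is the discounted cost $J_i^0$, now regarded as a function of the initial state $x$. First I would fix the analytic set-up: a stationary profile $\pmb{\varphi}=(\varphi_1,\dots,\varphi_N)\in\Phi$ is a tuple of transition probabilities from $X$ to the $A_i$, and against a fixed $\pmb{\varphi_{-i}}$ player $i$ faces an ordinary $\alpha$-discounted Markov decision process with bounded cost $c_i^0$ and kernel $p(\cdot\,|\,x,[\pmb{\varphi_{-i}}(x),a_i])$. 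Its value $v_i^{*}(\cdot)$ is the unique fixed point of the optimality operator $T_i$, and by {\bf A1} and {\bf A2} the one-step operator is continuous in $a_i\in A_i$, so an optimal stationary response exists and is described by pointwise minimisation.

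The core of the argument is a discrete-state approximation, exactly as in the proof of Theorem~\ref{thm1}. Using the density representation of~{\bf A2}, I would replace the original game by a game whose transitions are aggregated onto the cells of a finite measurable partition of $X$, the partition chosen fine enough that the approximating one-step kernels lie within a prescribed total-variation tolerance of $p$, uniformly in the action profile. For discounted games with a discrete state space our earlier result~\cite{jn3} (which in the unconstrained case reduces to the classical finite-state existence theory) provides an \emph{exact} stationary equilibrium $\pmb{\varphi}^{(n)}$ of the approximating game. The norm continuity in {\bf A2} is precisely what renders the total-variation error of this replacement controllable, and the uniform bound $b$ on the costs, together with the normalising factor $(1-\alpha)$ and the geometric discounting, converts a small total-variation error in the one-step kernel into an $O(\varepsilon)$ error in the discounted functionals.

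It then remains to transfer $\pmb{\varphi}^{(n)}$ to an $\varepsilon$-equilibrium of the original game and to obtain the conclusion for \emph{every} initial state; here the absence of constraints is decisive. I would show that, for $n$ large, $\pmb{\varphi}^{(n)}$ is a \emph{pointwise} Bellman near-optimal response for each player, that is, $J_i^0(\pmb{\varphi}^{(n)})(x)\le (T_i\,J_i^0(\pmb{\varphi}^{(n)}))(x)+(1-\alpha)\varepsilon$ for all $x\in X$. The standard comparison argument for the discounted contraction $T_i$ then propagates this into $J_i^0(\pmb{\varphi}^{(n)})(x)\le v_i^{*}(x)+\varepsilon=\inf_{\sigma_i\in\Pi_i}J_i^0([\pmb{\varphi}^{(n)}_{-i},\sigma_i])(x)+\varepsilon$ \emph{simultaneously for all} $x\in X$, which is exactly the $\varepsilon$-equilibrium requirement of Definition~\ref{def2}. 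The error estimates of the previous step are what let us verify the pointwise inequality with the prescribed $\varepsilon$.

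I expect the transfer step to be the main obstacle. The delicate point is the integration of the density against the joint action profile: a deviation by player $i$ changes only its own marginal, and weak$^{*}$ convergence of the product of the remaining players' marginals does \emph{not}, in general, control terms of the form $\int_{A_{-i}}\delta(x,y,[a_{-i},a_i])\,\pmb{\varphi_{-i}}(da_{-i}|x)$ that enter the value computation. This failure of joint continuity is precisely the mechanism behind Levy's example~\cite{lm}, which shows that \emph{exact} stationary equilibria may be absent. The function of the finite partition and of {\bf A2} is to substitute for this problematic weak$^{*}$ limit a strong (total-variation) approximation, on which the joint-action integrals converge and the discrete-state existence theorem applies; arranging the two error budgets---discretisation versus near-optimality---to add up to the prescribed $\varepsilon$ uniformly in $x$ is the technical heart of the argument.
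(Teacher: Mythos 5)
Your proposal follows essentially the same route as the paper: discretise the state space using the norm continuity in {\bf A2}, invoke an exact stationary-equilibrium theorem for the discrete-state approximating game, and transfer the equilibrium back with uniform error bounds on the discounted functionals, extending the near-optimality from stationary deviations to all of $\Pi_i$ by standard discounted dynamic programming. Your Bellman-operator formulation of the transfer step is just a repackaging of the paper's combination of Lemma \ref{l1} with the identity $\inf_{\phi_i\in\Phi_i^\gamma}J_i^{0,\gamma}=\inf_{\phi_i\in\Phi_i}J_i^{0,\gamma}$ (Lemma 4.1 of \cite{n1}).

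One concrete correction is needed: the partition cannot be taken \emph{finite}. Under {\bf A1}--{\bf A2} the family $\{\delta(x,\cdot,\cdot):x\in X\}\subset{\cal C}(A,{\cal L}^1)$ and the cost sections $\{c_i^\ell(x,\cdot)\}$ are only separable, not totally bounded, so for a prescribed tolerance $\gamma$ one can only cover $X$ by \emph{countably} many cells on which the kernel and the costs are within $\gamma$ of a fixed representative; this is exactly the construction of ${\cal P}^\gamma=\{X_j^\gamma\}_{j\in\mathbb{N}_0}$ in Section \ref{s3}. Consequently the approximating game ${\cal G}_c^\gamma$ has a countable (not finite) state space, and the exact-equilibrium input must be Federgruen's theorem \cite{f} for denumerable-state discounted games rather than ``classical finite-state existence theory'' or the constrained result of \cite{jn3}. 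With ``finite'' replaced by ``countable'' and the citation adjusted, your error-budget argument ($\gamma$ chosen so that $\epsilon(\gamma)<\varepsilon/2$, uniformly in $x$ and in the strategies) goes through exactly as in the paper.
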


The proof is given in Section \ref{s3}. 

\begin{remark}
\label{r2}
Stationary Nash equilibria exist only in some special cases of  
stochastic games satisfying assumptions {\bf A1} and {\bf A2}, see 
\cite{hprv} ($ARAT$ games), \cite{hsun} (other classes of games) and \cite{jn1} (a survey).
As shown by Levy and McLennan \cite{lm} stationary Nash equilibria need not exist in 
general under assumptions of Theorem \ref{thm2}.
\end{remark}

\begin{remark}
\label{r3}  Theorem \ref{thm2}  is an extension of 
Theorem 3.1 in \cite{n1}, where additionally it is assumed that
\begin{equation}
\label{intfin}
\int_X \sup_{\pmb{a}\in A} \delta(x,y,\pmb{a})\mu(dy) <\infty\quad\mbox{for each}\quad x\in X.
\end{equation}
\end{remark}

\subsection{\bf Weak correlated  equilibria in  constrained discounted stochastic games} 

Let $\Psi$ be the set of all transition
probabilities from $X$ to $A,$ that is,  $\psi\in\Psi$ if 
$\psi(\cdot|x)\in \Pr(A)$ for every $x\in X $ and 
$\psi(D|\cdot)$ is $\cal F$-measurable for any Borel set $D\subset A.$ 
 A  {\it stationary correlated strategy} for the players  in the $CSG$ is a constant sequence $(\psi,\psi,\ldots),$ 
where $\psi\in \Psi.$ As in the case of stationary strategies, 
 we shall identify a correlated strategy with the element $\psi$ of this sequence.

By the Ionescu-Tulcea theorem \cite{n}, any correlated strategy 
$\psi\in \Psi$ and the initial distribution  $\eta,$ 
induce  a unique probability measure 
$\mathbb{P}_\eta^{\psi}$ on $H^\infty.$  
The expectation operator with respect to $\mathbb{P}_\eta^{\psi}$ is denoted by $\mathbb{E}_\eta^{\psi}.$ 
Then the discounted cost functionals for player $i\in {\cal N}$   are defined as
$$
J_i^\ell(\psi) = (1-\alpha)\mathbb{E}^{\psi}_\eta\left[\sum_{t=1}^\infty  \alpha^{t-1} c_i^\ell(x^t,\pmb{a}^t) \right]
$$
for all $\ell\in{\cal L}_0.$ Obviously, here at stage $t$ 
the vector of actions $\pmb{a}^t$ is chosen according to a probability measure $\psi(\cdot|x^t).$

Furthermore,  let $\psi_{-i}$ denote the projection of $\psi(\cdot|x)$ on $A_{-i}$ for every $x\in X.$
For any player $i\in{\cal N}$ and a strategy $\pi_i\in\Pi_i$ we denote by $[\psi_{-i},\pi_i]$
a multi-strategy, where player $i$ uses a strategy $\pi_i$ and the other players act as one player applying $\psi_{-i}.$
 In this case,    $J^0_i([\psi_{-i},\pi_i])$    denotes the expected discounted cost for player $i.$   
Set
$$\Delta_i(\psi_{-i})=\{\pi_i\in\Pi_i:\ J_i^\ell([\psi_{-i},\pi_i])\le \kappa_i^\ell\ \mbox{ for all } \ \ell\in{\cal L}\}.$$

\begin{definition}
\label{def3}   A strategy  
$\psi^*\in\Psi$ is called a {\it weak correlated equilibrium} in the $CSG$, if 
 for every   $i\in {\cal N} $ and $\ell \in {\cal L},$  
 $J^\ell_i (\psi^*)  \le \kappa^\ell_i$  and   for every   $i\in {\cal N}, $
 \begin{equation}
\label{J00} J^0_i(\psi^*) \le\inf_{\pi_i \in\Delta_i(\psi_{-i}^*)} J^0_i([\psi_{-i}^*,\pi_i]).
\end{equation}
\end{definition}

If  all players but $i\in {\cal N}$  accept to use 
$\psi^*$ to select an action profile in any state $x$ and player $i\in {\cal N}$ decides to play independently of
all of them by choosing a feasible strategy $\pi_i$, then the action profile for all players in ${\cal N}\setminus \{i\}$ 
is selected  with respect to the marginal probability distribution $\psi_{-i}^*(\cdot|x)$
on $A_{-i}.$  When $\psi^*$ is a weak correlated equilibrium,
then inequality (\ref{J00}) says that unilateral deviations from $\psi^*$ are not profitable. 
This is an adaptation  of the equilibrium concept, formulated by Moulin and Vial \cite{mv} 
for static games, to our dynamic game model.

In order to state our third main result, we define  $\Phi_{-i}:= \prod_{j\in {\cal N}\setminus\{i\}}\Phi_j $
and impose  the following condition.\\

\noindent{\bf  Assumption A4} \\   For each  player $i\in {\cal N},$     
$$
\sup_{\pmb{\varphi_{-i}} \in  \Phi_{-i}}  \min_ {\sigma_i\in \Phi_i}
\max_{\ell \in {\cal L}}\ (J^\ell_i([\pmb{\varphi_{-i}},\sigma_i]) -\kappa^\ell_i) <0.$$
\\

This assumption implies the standard Slater condition widely used in the literature, see \cite{a,as,dpr2,jn3}.\\   

\noindent{\bf  Assumption A5} \\   For each  player $i\in {\cal N}$ 
and any $\pmb{\varphi_{-i}}\in \Phi_{-i}$, there exists $\sigma_i\in \Phi_i$ such that   
$$J^\ell_i([\pmb{\varphi_{-i}},\sigma_i]) <\kappa^\ell_i \quad\mbox{for all}\quad  \ell \in {\cal L}. $$
\\

Assumptions {\bf A4} and {\bf A5} may seemingly be more general. Namely, we can formulate them for 
$\pi_i\in\Pi_i$  instead of $\sigma_i\in\Phi_i$ and replace the set $\Phi_i$ by $\Pi_i.$ 
However, Remark \ref{rS} implies that these formulations are in fact equivalent.

\begin{remark}
\label{remz}
From  Assumption {\bf  A4}, it follows that there exists $\zeta>0$ such that for every  player $i\in {\cal N},$     
$$ 
\sup_{\pmb{\varphi_{-i}} \in  \Phi_{-i}}  \min_ {\sigma_i\in \Phi_i}
\max_{\ell \in {\cal L}}\ (J^\ell_i([\pmb{\varphi_{-i}},\sigma_i]) -\kappa^\ell_i) <- \zeta,$$
and consequently  that for each  player $i\in {\cal N}$ 
and any $\pmb{\varphi_{-i}}\in \Phi_{-i}$, there exists $\sigma_i\in \Phi_i$ such that   
$$J^\ell_i([\pmb{\varphi_{-i}},\sigma_i]) <\kappa^\ell_i  -\zeta \quad\mbox{for all}\quad  \ell \in {\cal L}. $$
\end{remark}

\begin{theorem}
\label{thm3}  Assume {\bf A1}, {\bf A2} {\it and} {\bf A4}. Then, the $CSG$
possesses a weak correlated equilibrium.
\end{theorem}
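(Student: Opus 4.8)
The plan is to realise the desired correlated equilibrium as a weak$^*$ limit of the independent approximate equilibria furnished by Theorem~\ref{thm1}, exploiting the uniform Slater gap provided by Remark~\ref{remz} to control feasibility along the way. First I would observe that {\bf A4} implies {\bf A3}, so Theorem~\ref{thm1} applies: choosing a sequence $\varepsilon_n\downarrow 0$ I obtain stationary approximate equilibria $\pmb{\varphi}^n=(\varphi_1^n,\dots,\varphi_N^n)\in\Phi$ satisfying (\ref{def1e1}) and (\ref{def1e2}) with $\varepsilon=\varepsilon_n$. To each I associate the \emph{product} correlated strategy $\psi^n\in\Psi$ defined by $\psi^n(\cdot|x)=\bigotimes_{j\in{\cal N}}\varphi_j^n(\cdot|x)$. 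Since the joint action law under $\psi^n$ coincides with that of independent play, $J_i^\ell(\psi^n)=J_i^\ell(\pmb{\varphi}^n)$ for all $i,\ell$, and the opponents' marginal $\psi_{-i}^n$ is exactly the product $\pmb{\varphi_{-i}^n}\in\Phi_{-i}$; in particular $\Delta_i(\psi_{-i}^n)=\Delta_i(\pmb{\varphi_{-i}^n})$.

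Next I would set up the compactness and continuity needed for the limit. The set $\Psi$, viewed inside the unit ball of the dual of the separable space $L^1(X,\mu;C(A))$ (separability holds because $\cal F$ is countably generated and $A$ is compact metric), is weak$^*$ sequentially compact, so after passing to a subsequence $\psi^n\to\psi^*$ in the weak$^*$ sense for some $\psi^*\in\Psi$. The analytic core of the argument is that, under {\bf A1} and {\bf A2}, each functional $\psi\mapsto J_i^\ell(\psi)$ is weak$^*$ continuous on $\Psi$: norm continuity of the density in the action profile propagates through the one-step transition operator, and discounting together with the uniform bound $b$ lets one truncate the infinite sum, so the $t$-step state--action laws converge and the continuous costs $c_i^\ell(x,\cdot)$ pass to the limit. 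Granting this, $J_i^\ell(\psi^*)=\lim_n J_i^\ell(\psi^n)\le\lim_n(\kappa_i^\ell+\varepsilon_n)=\kappa_i^\ell$, which is the feasibility half of Definition~\ref{def3}.

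The substantive step is the deviation inequality (\ref{J00}). Fix $i$ and an arbitrary feasible response $\pi_i\in\Delta_i(\psi_{-i}^*)$; by Remark~\ref{rS} I may take $\pi_i=\varphi_i\in\Phi_i$ stationary. Using $\varphi_i$ against $\psi_{-i}^n$ and applying weak$^*$ continuity to the tensors $\psi_{-i}^n\otimes\varphi_i\to\psi_{-i}^*\otimes\varphi_i$ gives $J_i^0([\psi_{-i}^n,\varphi_i])\to J_i^0([\psi_{-i}^*,\varphi_i])$ and a vanishing constraint violation $\delta_n:=\max_{\ell}\,(J_i^\ell([\psi_{-i}^n,\varphi_i])-\kappa_i^\ell)^+\to0$. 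Because $\psi_{-i}^n\in\Phi_{-i}$ is a product, Remark~\ref{remz} supplies a stationary Slater response $s_i^n\in\Phi_i$ with $J_i^\ell([\psi_{-i}^n,s_i^n])<\kappa_i^\ell-\zeta$ for all $\ell$. Against the fixed opponents $\psi_{-i}^n$ player $i$ faces an ordinary constrained MDP whose set of achievable cost vectors is convex and each point of which is attained by a stationary strategy (Remark~\ref{rS}); hence there is a stationary $\tilde\sigma_i^n\in\Phi_i$ with $J_i^\ell([\psi_{-i}^n,\tilde\sigma_i^n])=(1-\lambda_n)J_i^\ell([\psi_{-i}^n,\varphi_i])+\lambda_n J_i^\ell([\psi_{-i}^n,s_i^n])$ for every $\ell\in{\cal L}_0$. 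Choosing $\lambda_n\to0$ with $\delta_n/\lambda_n\to0$ makes $\tilde\sigma_i^n\in\Delta_i(\psi_{-i}^n)$ feasible while forcing $J_i^0([\psi_{-i}^n,\tilde\sigma_i^n])\to J_i^0([\psi_{-i}^*,\varphi_i])$. Now (\ref{def1e2}) yields $J_i^0(\psi^n)-\varepsilon_n\le J_i^0([\psi_{-i}^n,\tilde\sigma_i^n])$; letting $n\to\infty$ gives $J_i^0(\psi^*)\le J_i^0([\psi_{-i}^*,\varphi_i])$, and since $\varphi_i$ was an arbitrary feasible response this is exactly (\ref{J00}).

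I expect the main obstacle to be the weak$^*$ continuity of the cost functionals, which rests entirely on the norm-continuity of the transition density in {\bf A2}; the second delicate point is that the limit $\psi^*$ is genuinely correlated (the product structure of the $\psi^n$ is not preserved under weak$^*$ limits), which both explains why one obtains a weak correlated rather than a Nash equilibrium and forces the use of the uniform gap $\zeta$ of Remark~\ref{remz} --- available precisely because each $\psi_{-i}^n$ is still a product in $\Phi_{-i}$ --- to repair feasibility of the candidate deviations.
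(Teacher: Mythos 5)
Your proposal is correct and follows the same overall architecture as the paper's proof (weak$^*$ limit of the Theorem~\ref{thm1} approximate equilibria, feasibility of $\psi^*$ by continuity along the sequence, and a Slater-gap perturbation of candidate deviations), but the execution of the deviation step is genuinely different and arguably cleaner. The paper descends to the discretised game ${\cal G}^{\gamma_n}$: it uses the fact that $\pmb{\psi^n}$ is an \emph{exact} equilibrium there with relaxed constants $\kappa_i^\ell+\varepsilon_n/2$, invokes Lemma~\ref{A1} to convert stationary deviations into piecewise constant Markov ones, and splits into two cases according to whether the limit constraints are tight, using the occupation-measure mixing only in the tight case. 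You instead stay entirely in the original game, apply the approximate-equilibrium inequality (\ref{def1e2}) directly to a mixed deviation $\tilde\sigma_i^n\in\Delta_i(\pmb{\varphi_{-i}^n})$, and treat tight and slack constraints uniformly (in the slack case your $\delta_n$ simply vanishes and any $\lambda_n\to0$ works). This buys you: no appeal to Lemma~\ref{A1}, no case analysis, and no need for the stronger ``exact equilibrium in ${\cal G}^{\gamma_n}$'' property beyond the stated conclusion of Theorem~\ref{thm1}. What it costs is nothing essential, but two of your supporting claims deserve sharper justification. First, the weak$^*$ continuity of $\psi\mapsto J_i^\ell(\psi)$ on $\Psi^\vartheta$ (and of $\psi_{-i}\mapsto J_i^\ell([\psi_{-i},\varphi_i])$) is exactly the content of the paper's Lemma~\ref{l2}, which rests on Lemma 4.1 in \cite{jn2}; your sketch of the inductive propagation through the transition operator is the right mechanism, but it is a nontrivial lemma, not a remark, and note that continuity holds on the joint space $\Psi^\vartheta$ only --- on the product $\widehat{\Psi}^\vartheta$ it can fail (Remark~\ref{r8}), which is why the limit is a correlated rather than a Nash equilibrium, as you observe. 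Second, the existence of a stationary $\tilde\sigma_i^n$ realising the convex combination of cost vectors is not Remark~\ref{rS}; it is the occupation-measure argument (convexity of the set of occupation measures, Proposition 3.9 in \cite{dpr2}, plus the disintegration result, Lemma 2.3 in \cite{dpr2}) that the paper itself deploys at the corresponding point. With those two references supplied, your argument closes and yields (\ref{J00}).
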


The proof is given in Section  \ref{s4}. 

\begin{remark}
\label{r4}  The existence of a weak correlated equilibrium in an unconstrained case was proved by 
Nowak  \cite{n2}  under  additional integrability condition (\ref{intfin}).
\end{remark}

\begin{remark}
\label{r5a} 
If $\psi^*$ is a stationary weak correlated  equilibrium in an  $ARAT$ game,
then $(\psi_{1},\psi_{2},...,\psi_N)$ is a stationary Nash equilibrium in the $ARAT$ game. 
Thus, Theorem \ref{thm3} implies the main result of Dufour and Prieto-Rumeau \cite{dpr2}, 
if  the action sets are independent of the state. 
However, their proof is more direct in the sense that it is not based on an approximation 
by games with discrete state spaces. Instead, they directly apply a fixed point theorem. 
An extension to the case of action spaces depending 
on the state variable raises some additional technical issues.
\end{remark}

\section{Approximating games with countable state spaces and proofs of Theorems
 \ref{thm1}  and \ref{thm2} } \label{s3}

In this section, we define a class of games that resemble 
stochastic games with a countable state space. Using them we can approximate the original 
game and apply the results on existence of stationary equilibria in discounted games with 
countably many states proved by Federgruen \cite{f} (unconstrained case)  
and Ja{\'s}kiewicz and Nowak \cite{jn3} (constrained case). 

Let ${\cal C}(A)$ be the Banach space of all real-valued continuous 
functions on $A$ endowed with the maximum norm $\|\cdot\|.$ Let ${\cal C}_b
= \{w_1,w_2,...\}$  denote the countable dense subset in the ball 
$\{w\in {\cal C}(A): \|w\|\le b\}$ in ${\cal C}(A),$ where $b\ge |c^\ell_i(x,\pmb{a})|$
for all $i\in {\cal N}$ $\ell \in {\cal L}_0,$ $(x,\pmb{a})\in \mathbb{K}.$ 

We write ${\cal L}^1$ to denote  the Banach space  ${\cal L}^1(X,{\cal F},\mu)$ 
of all absolutely integrable real-valued measurable functions on $X$ with the norm
$$\| v \|_1= \int_X|v (y)|\mu(dy), \quad v \in {\cal L}^1.$$
Let ${\cal C}(A,{\cal L}^1)$ be the space of all ${\cal L}^1$-valued 
continuous functions on $A$ with the norm
$$\|\lambda\|_c = \max_{\pmb{a}\in A}\int_X|\lambda(y,\pmb{a})|\mu(dy).$$
Here an element of ${\cal C}(A,{\cal L}^1)$ is written as a product measurable
function $\lambda: X\times A \to \mathbb{R}$ such that
$\lambda(\cdot,\pmb{a}) \in {\cal L}^1$ for each $\pmb{a}\in A$ and   
$$\|\lambda(\cdot,\pmb{a}^n) - \lambda(\cdot,\pmb{a})\|_1 = 
\int_X|\lambda(y,\pmb{a}^n) - \lambda(y,\pmb{a})|\mu(dy) \to 0 
\quad \mbox{as}\quad \pmb{a}^n \to \pmb{a}, \  n\to\infty.$$
By Lemma 3.99 in \cite{ab}, the space  ${\cal C}(A,{\cal L}^1)$ is separable.
Assumption {\bf A2} implies that 
${\cal D}:= \{ \delta(x,\cdot,\cdot): x \in X \} \subset {\cal C}(A,{\cal L}^1)$
is also a separable space when endowed with the relative topology.
Therefore, there exists a subset $\{x_k: k \in \mathbb{N}\}$  of the state space $X$ such that 
the set  $\{\delta(x_k,\cdot,\cdot): k \in \mathbb{N} \}$ is dense in ${\cal D}.$

For any player $i\in {\cal N},$ and positive integers
$m_{i\ell},$  $\ell\in {\cal L}_0,$ we put $\overline{m}_i =(m_{i0},m_{i1},...,m_{iL}).$  
Then, given any   $\gamma >0 $, we define
 $B^\gamma(i,\overline{m}_i)$  as the set of all states $x\in X$ such that 
\begin{equation}
\label{bi}
\sum_{\ell=0}^L \|c_i^\ell(x,\cdot)-  w_{m_{i\ell}}\|<  \gamma.
\end{equation} 
For any $k \in \mathbb{N},$ let 
\begin{equation}
\label{bk}
B_k^\gamma := \{x\in X: \|\delta(x,\cdot,\cdot)-\delta(x_k,\cdot,\cdot) \|_c = 
\max_{\pmb{a}\in A}\int_X|\delta(x,y,\pmb{a})-\delta(x_k,y,\pmb{a})|\mu(dy) <\gamma \}. 
\end{equation}
 It is obvious that the sets $B_k^\gamma$ and    $B^\gamma(i,\overline{m}_i)$ 
 belong to $\cal F$ and the union of all sets
$$B_k^\gamma\cap B^\gamma(1,\overline{m}_1) \cap \ldots\cap B^\gamma(N,\overline{m}_N)$$  
is the whole state space $X.$  
Indeed, if $x\in X,$ then there exists $k\in\mathbb{N}$ such that $x\in B_k^\gamma$ and, 
for any player $i\in {\cal N},$ 
there exist functions $w_{m_{i\ell}}\in {\cal C}_b, $ and thus $\overline{m}_i $ 
such that (\ref{bi}) holds.

Let $\xi$ be a fixed one-to-one correspondence between the sets $\mathbb{N}$ 
and $\mathbb{N}\times \mathbb{N}^{N(L+1)}.$
Assuming that $j\in \mathbb{N}$ and    
$\xi(j)=  (k,\overline{m}_1,...,\overline{m}_N),$ we put 
$$Y_j^\gamma := 
B_k^\gamma\cap B^\gamma(1,\overline{m}_1) \cap \ldots\cap 
B^\gamma(N,\overline{m}_N).$$
We can assume without loss of generality that $Y^\gamma_1 \not= \emptyset.$  
Next, we set $X^\gamma_1= Y_1^\gamma$  and
$$X_\tau^\gamma = Y_\tau^\gamma - \bigcup_{t<\tau} X^\gamma_t, \quad\mbox{for}\quad 
\tau\in\mathbb{N}\setminus \{1\}.$$
Omitting empty sets $X_\tau^\gamma$ we obtain a subset $\mathbb{N}_0 \subset \mathbb{N}$ such that
$${\cal P}^\gamma = \{ X_j^\gamma: j\in \mathbb{N}_0 \}$$
is a measurable partition of the state space $X.$ Choose any  $n \in \mathbb{N}_0.$  Then, $\xi(n)$
is a unique sequence in ${\mathbb{N}}\times\mathbb{N}^{N(L+1)}$ that depends on $n$ 
and, therefore, we can write 
$\xi(n)= (k^n,\overline{m}_1^n,...,\overline{m}_N^n)$ where
$\overline{m}_i^n = (m_{i0}^n, m_{i1}^n,...,m_{iL}^n),$
$i\in {\cal N}.$   Next, for each $x\in X_n^\gamma,$ we define
\begin{equation}
\label{deltaci}
\delta^\gamma(x, y,\pmb{a}) := \delta(x_{k^n},y,\pmb{a}) \quad\mbox{for }y\in X\quad\mbox{and}\quad
c^{\ell,\gamma}_i (x,\pmb{a}):= 
w_{m_{i,\ell}^n}(\pmb{a}) \quad  \mbox{ for all }\ell \in {\cal L}_0,\  i\in {\cal N}. 
\end{equation}
From (\ref{bi}), (\ref{bk}) and  (\ref{deltaci}), it follows that
for each $n\in \mathbb{N}_0$ and $x\in X_n^\gamma,$ we have  
\begin{equation}
\label{eps1}
\| c_i^\ell(x,\cdot)-  c^{\ell,\gamma}_i (x, \cdot)\| <\gamma\quad  \mbox{ for all } \ell\in {\cal L}_0
\end{equation}
and 
\begin{equation}
\label{eps2}
\|\delta(x,\cdot,\cdot)-
\delta^\gamma(x,\cdot,\cdot)\|_c=
\max_{\pmb{a}\in A} \int_X|\delta(x, y,\pmb{a})-
\delta^\gamma(x,  y,\pmb{a})|\mu(dy) <\gamma.
\end{equation}

The original game defined in Section \ref{s2} is now denoted by $\cal G$. We use ${\cal G}^\gamma$ 
to denote the game, where the cost functions are $ c^{\ell,\gamma}_i,$ $\ell\in{\cal L}_0$ and $i\in{\cal N}$, 
 and the transition probability is
$$
p^\gamma(B|x,\pmb{a}) = \int_X
\delta^\gamma(x,  y,\pmb{a})\mu(dy), \quad  B \in {\cal F}.
$$
Note that $ c^{\ell,\gamma}_i(x,\pmb{a})$ and $ p^\gamma(B|x,\pmb{a})$ 
are constant functions of $x$ on every set $X_n^\gamma.$

The discounted expected costs in the game ${\cal G}^\gamma$ under 
a multi-strategy $\pmb{\pi}\in \Pi$ are denoted by
$$ J_i^{\ell,\gamma}(\pmb{\pi})(x)\quad\mbox{and}\quad
 J_i^{\ell,\gamma}(\pmb{\pi}) = \int_X J_i^{\ell,\gamma}(\pmb{\pi})(x)\eta(dx).$$

Let 
\begin{equation}
\label{eps}
\epsilon (\gamma) := \frac{\gamma (1-\alpha+b\alpha)}{1-\alpha}.
\end{equation}
From  (\ref{eps1}),  (\ref{eps2})  and Lemma 4.4 in \cite{n1}, we conclude the following auxiliary result.\\

\begin{lemma}
\label{l1}  For each $i\in {\cal N}$ and $\ell\in {\cal L}_0,$ we have 
$$\sup_{x\in X} \sup_{\pmb{\pi} \in \Pi}|J_i^\ell(\pmb{\pi})(x)-  J_i^{\ell,\gamma}(\pmb{\pi})(x)|\le \epsilon(\gamma).
$$
\end{lemma}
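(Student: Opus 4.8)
The plan is to establish the estimate uniformly in $x$ and $\pmb{\pi}$ by decomposing the difference $J_i^\ell(\pmb{\pi})(x)-J_i^{\ell,\gamma}(\pmb{\pi})(x)$ into a \emph{cost perturbation} and a \emph{transition perturbation}, exploiting that the games ${\cal G}$ and ${\cal G}^\gamma$ differ only through the per-stage costs and the kernel. First I would record that $p^\gamma$ is genuinely a probability kernel: for $x\in X_n^\gamma$ one has $\int_X\delta^\gamma(x,y,\pmb{a})\mu(dy)=\int_X\delta(x_{k^n},y,\pmb{a})\mu(dy)=1$, so the Ionescu--Tulcea construction applies to ${\cal G}^\gamma$ and all the measures below are well-defined probabilities. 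Introduce the intermediate functional $\hat J_i^\ell(\pmb{\pi})(x)$ obtained by running the \emph{original} dynamics $p$ with the \emph{perturbed} cost $c_i^{\ell,\gamma}$; the triangle inequality then splits the target difference through $\hat J_i^\ell$.

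For the cost step, since $\pmb{\pi}$ and $p$ are identical in $J_i^\ell$ and $\hat J_i^\ell$, the processes $(x^t,\pmb{a}^t)$ have the same law, and by (\ref{eps1}) one has $\sup_{\pmb{a}}|c_i^\ell(x,\pmb{a})-c_i^{\ell,\gamma}(x,\pmb{a})|<\gamma$ for \emph{every} $x$ (the partition ${\cal P}^\gamma$ covers $X$). Hence $|J_i^\ell(\pmb{\pi})(x)-\hat J_i^\ell(\pmb{\pi})(x)|\le(1-\alpha)\sum_{t\ge1}\alpha^{t-1}\gamma=\gamma$. The transition step compares $\hat J_i^\ell$ with $J_i^{\ell,\gamma}$: same bounded cost $|c_i^{\ell,\gamma}|\le b$, but dynamics $p$ versus $p^\gamma$. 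Let $\rho_t,\rho_t^\gamma$ be the laws of the history $h^t$ under $\mathbb{P}_x^{\pmb{\pi}}$ in ${\cal G},{\cal G}^\gamma$, and $\beta_t,\beta_t^\gamma$ the laws of $(x^t,\pmb{a}^t)$; since the action kernels $\pi_i^t$ coincide in both games, $\beta_t,\beta_t^\gamma$ are pushforwards of $\rho_t,\rho_t^\gamma$ by one common map, whence $\|\beta_t-\beta_t^\gamma\|\le\|\rho_t-\rho_t^\gamma\|$ in total variation.

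A one-step recursion is then the crux: writing $\rho_{t+1}-\rho_{t+1}^\gamma$ as the sum of ``apply $p$ to $\rho_t-\rho_t^\gamma$'' and ``apply $p-p^\gamma$ to $\rho_t^\gamma$'', and using that a Markov kernel does not increase the total-variation ($L^1$) norm together with the uniform bound (\ref{eps2}), gives $\|\rho_{t+1}-\rho_{t+1}^\gamma\|\le\|\rho_t-\rho_t^\gamma\|+\gamma$; since $\rho_1=\rho_1^\gamma=\delta_x$, this yields $\|\rho_t-\rho_t^\gamma\|\le(t-1)\gamma$. Bounding $|\int c_i^{\ell,\gamma}\,d(\beta_t-\beta_t^\gamma)|\le b(t-1)\gamma$ and summing with the discount gives
$$|\hat J_i^\ell(\pmb{\pi})(x)-J_i^{\ell,\gamma}(\pmb{\pi})(x)|\le(1-\alpha)b\gamma\sum_{t=1}^\infty(t-1)\alpha^{t-1}=(1-\alpha)b\gamma\,\frac{\alpha}{(1-\alpha)^2}=\frac{b\gamma\alpha}{1-\alpha}.$$

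Combining the two steps produces $\gamma+\frac{b\gamma\alpha}{1-\alpha}=\frac{\gamma(1-\alpha+b\alpha)}{1-\alpha}=\epsilon(\gamma)$, exactly as in (\ref{eps}), and the bound is uniform in $x$ and $\pmb{\pi}$ because no step uses anything beyond $|c|\le b$, the shared action kernels, and the uniform kernel estimate (\ref{eps2}). The one delicate point is the linear-in-$t$ drift $\|\rho_t-\rho_t^\gamma\|\le(t-1)\gamma$: it is the discounting, through $\sum_{t\ge1}(t-1)\alpha^{t-1}=\alpha/(1-\alpha)^2$, that tames this growth and reproduces precisely the constant in $\epsilon(\gamma)$. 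This is exactly the estimate packaged in Lemma 4.4 of \cite{n1}, to which the statement appeals, so the proof can either cite it directly or carry out the short recursion above.
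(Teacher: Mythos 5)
Your argument is correct and is essentially the paper's own route: the paper proves this lemma simply by invoking the bounds (\ref{eps1}), (\ref{eps2}) together with Lemma 4.4 in \cite{n1}, and your cost/transition decomposition with the total-variation recursion $\|\rho_t-\rho_t^\gamma\|\le(t-1)\gamma$ is exactly the estimate that cited lemma packages, reproducing the constant $\epsilon(\gamma)=\gamma(1-\alpha+b\alpha)/(1-\alpha)$. Your self-contained reconstruction, including the check that $p^\gamma$ is a genuine probability kernel, is a valid substitute for the citation.
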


With ${\cal G}^\gamma$ we associate a stochastic game ${\cal G}^\gamma_c$ 
with the countable state space $\mathbb{N}_0  \subset \mathbb{N}$, the costs given by 
\begin{equation}
\label{costcssp}
\widehat{c}^{\ell,\gamma}_i( n,\pmb{a}):=c^{\ell,\gamma}_i( x,\pmb{a}), 
\quad x\in X^\gamma_n,\quad  n \in \mathbb{N}_0, \quad \pmb{a}\in A,
\end{equation}
and transitions defined as 
\begin{equation}
\label{trcssp}
\widehat{p}^\gamma(\tau|n,\pmb{a}):=\delta^\gamma(X_\tau^\gamma|x,\pmb{a}), 
\quad x\in X^\gamma_n,\quad   n,  \tau\in \mathbb{N}_0, \quad \pmb{a}\in A. 
\end{equation}
Note that the right-hand sides in (\ref{costcssp}) and (\ref{trcssp}) 
are independent of $x$ in $X_n^\gamma$  and thus the costs and transitions
above are well-defined.  A stationary strategy for player $i\in {\cal N}$ 
in the game ${\cal G}^\gamma_c$  is a transition probability $f_i$  from $\mathbb{N}_0$ to $A_i.$ 
The set of all stationary strategies for player $i\in {\cal N}$ 
in this game is denoted by $F_i.$ We  put $F:= \prod_{i\in {\cal N}}F_i.$
 
The expected discounted costs in the game
${\cal G}^\gamma_c$   under stationary multi-strategy $\pmb{\pi}$ 
are denoted by
$$\widehat{J}^{\ell,\gamma}_i(\pmb{\pi})(n), \ n\in\mathbb{N}_0,\quad\mbox{and}\quad
\widehat{J}^{\ell,\gamma}_i(\pmb{\pi}) =
\sum_{n\in \mathbb{N}_0}\widehat{J}^{\ell,\gamma}_i(\pmb{\pi})(n)\eta(X_n^\gamma).
$$

Let $ \Phi^\gamma_i$ be the set of all piecewise constant  stationary strategies 
of player $i\in {\cal N} $ in the game ${\cal G}^\gamma.$  A strategy $\varphi_i\in \Phi^\gamma_i$, if, 
for each $n\in\mathbb{N}_0,$ there exists a probability measure $\nu_n$ on $A_i$ 
such that  $\varphi_i (da_i|x)= \nu_n(da_i)$ for all  $x\in X_n^\gamma.$   We put
$\Phi^\gamma = \prod_{i\in {\cal N}} \Phi^\gamma_i.$ 

Let $\pmb{f}= (f_1,...,f_N) \in F$ and $\pmb{\varphi}= (\varphi_1,...,\varphi_N)\in \Phi^\gamma $ be such that
\begin{equation}
\label{R0}
\varphi_i(da_i|x)= f_i(da_i|n)\quad\mbox{for all}\quad i\in {\cal N},\ n\in\mathbb{N}_0,\ x\in X_n^\gamma.
\end{equation}
Then, for each $i\in {\cal N}$, $\ell \in {\cal L}_0,$ $n\in\mathbb{N}$ and $x\in X_n^\gamma,$
\begin{equation}
\label{R1} 
J^{\ell,\gamma}_i(\pmb{\varphi})(x)= \widehat{J}^{\ell,\gamma}_i(\pmb{f})(n)  
\end{equation} 
and 
\begin{equation}
\label{R2} 
J^{\ell,\gamma}_i(\pmb{\varphi}) = \widehat{J}^{\ell,\gamma}_i(\pmb{f}). 
\end{equation}
Equations (\ref{R1}) and (\ref{R2}) show that ${\cal G}^\gamma$ with the strategy sets
$\Phi^\gamma_i$ can be recognised as a game with a countable state space. This observation plays an 
important role in the proof, because we can apply a result for games on countable state spaces.

\begin{proof}{\it of Theorem \ref{thm1}.} Let $\varepsilon >0$ and $i \in{\cal N}$.
Choose $\gamma>0$ in (\ref{eps}) such that $\epsilon(\gamma) < \varepsilon/2.$  
By  Assumption {\bf A3} and Remark \ref{rS} we imply that
for any multi-strategy $\pmb{\varphi}\in \Phi^\gamma$  
there exists $\sigma_i\in\Phi_i$ such that
\begin{equation}\label{LA0}
 J^\ell_i([\pmb{\varphi_{-i}},\sigma_i]) \le \kappa^\ell_i  \quad \mbox{for all}\ \ell \in {\cal L}.
\end{equation}
By Lemma \ref{A1} in Appendix,   there exists a piecewise constant Markov strategy $\overline{\pi}_i$ such that
$$
J_i^{\ell,\gamma}([\pmb{\varphi_{-i}},\sigma_i])=J_i^{\ell,\gamma}([\pmb{\varphi_{-i}},\overline{\pi}_i])
$$ 
for all $\ell\in{\cal L}_0.$ 
By Lemma \ref{l1} and (\ref{LA0}) we conclude that
$$
 J_i^{\ell,\gamma}([\pmb{\varphi_{-i}},\overline{\pi}_i])<\kappa^\ell_i +\frac\varepsilon 2 \quad \mbox{for all}\ \ell \in {\cal L}.
$$
This means that the approximating game ${\cal G}^\gamma$ satisfies the Slater condition with the constants 
$\kappa^\ell_i +\frac\varepsilon 2,$ $\ell\in{\cal L}.$  Note that the constraint constants in ${\cal G}^\gamma$
are also equal $\kappa^\ell_i +\frac\varepsilon 2,$ $\ell\in{\cal L}.$   Therefore, the associated game ${\cal G}_c^\gamma$
also satisfies the Slater condition with the same constants $ \kappa^\ell_i +\frac\varepsilon 2,$ $\ell\in{\cal L}.$
Making use of  Corollary 2 in \cite{jn3},  we infer that the game ${\cal G}_c^\gamma$ 
possesses a stationary Nash equilibrium 
$\pmb{f}^*= (f_1^*,...,f_N^*).$ 
Define $\pmb{\varphi}^* =
 (\varphi_1^*,...,\varphi_N^*)\in  \Phi^\gamma$ as in (\ref{R0})  with $ \pmb{\varphi}=\pmb{\varphi}^*$  and $f=f^*.$ 
Then,
$$
J_i^{0,\gamma}(\pmb{\varphi}^*)\le J_i^{0,\gamma}([\pmb{\varphi_{-i}^*},\hat{\pi}_i])$$
for any piecewise constant strategy $\hat{\pi}_i$ such that
$$ J_i^{\ell,\gamma}([\pmb{\varphi_{-i}^*},\hat{\pi}_i])\le\kappa^\ell_i +\frac\varepsilon 2\quad \mbox{for all}\ \ell \in {\cal L}.$$
We now show that $\pmb{\varphi }^*$ is an 
$\varepsilon$-equilibrium in the original game.
Note that for every player $i\in{\cal N}$
$$
J_i^{\ell,\gamma}( \pmb{\varphi^*} )=
J_i^{\ell,\gamma}([\pmb{\varphi_{-i}^*},\varphi_i^*])\le\kappa^\ell_i +\frac\varepsilon 2\quad \mbox{for all}\ \ell \in {\cal L}.
$$
Hence, for every player $i\in{\cal N}$
$$
J_i^{\ell }( \pmb{\varphi^*} ) 
 \le\kappa^\ell_i + \varepsilon \quad \mbox{for all}\ \ell \in {\cal L},
$$
i.e.,  condition (\ref{def1e1}) holds. 
Consider  any feasible strategy $\pi_i\in\Delta_i(\pmb{\varphi_{-i}^*}),$ i.e.,
\begin{equation}\label{LA2}
 J^\ell_i([\pmb{\varphi_{-i}^*},\pi_i]) \le \kappa^\ell_i,  \quad \mbox{for all}\quad \ell \in {\cal L}.
 \end{equation}
 Applying Remark \ref{rS}, we deduce that there exists a strategy 
$\sigma_i\in\Phi_i$ such that
\begin{equation}\label{LA3}
J^\ell_i([\pmb{\varphi_{-i}^*},\pi_i])=J^\ell_i([\pmb{\varphi_{-i}^*},\sigma_i])\quad\mbox{for all}\quad \ell\in{\cal L}_0.
\end{equation}
Then, by Lemma \ref{A1} in Appendix,  there exists a piecewise constant Markov strategy 
$\overline{\pi}_i$ such that
\begin{equation}\label{LA}
J_i^{\ell,\gamma}([\pmb{\varphi_{-i}^*},\sigma_i])=J_i^{\ell,\gamma}([\pmb{\varphi_{-i}^*},\overline{\pi}_i])\quad
 \mbox{for all $\ell\in{\cal L}_0.$} \end{equation}
Moreover, by (\ref{LA}), Lemma \ref{l1}, (\ref{LA3}) and (\ref{LA2}), 
for every $\ell\in{\cal L},$ we have  
$$
J_i^{\ell,\gamma}([\pmb{\varphi_{-i}^*},\overline{\pi}_i])\le  
J^\ell_i([\pmb{\varphi_{-i}^*},\sigma_i])+\frac\varepsilon 2 
= J^\ell_i([\pmb{\varphi_{-i}^*},\pi_i])+\frac\varepsilon 2
\le \kappa^\ell_i
+\frac\varepsilon 2.
$$
 In other words,
$\overline{\pi}_i$ is a feasible strategy in ${\cal G}^\gamma$.
Therefore,  by Lemma \ref{l1}, (\ref{LA}) and (\ref{LA3}), we infer
\begin{eqnarray*}
J_i^{0}(\pmb{\varphi}^*)&\le& J_i^{0,\gamma}(\pmb{\varphi}^*)+\frac\varepsilon2\le 
J_i^{0,\gamma}([\pmb{\varphi_{-i}^*},\overline{\pi}_i])+\frac\varepsilon2=
J_i^{0,\gamma}([\pmb{\varphi_{-i}^*},\sigma_i])+\frac\varepsilon2\\
&<& J_i^{0}([\pmb{\varphi_{-i}^*},\sigma_i]) +\varepsilon=
J_i^{0}([\pmb{\varphi_{-i}^*},\pi_i])+\varepsilon.
\end{eqnarray*}
This fact together with  (\ref{LA2}) implies that (\ref{def1e2}) holds.  \hfill 
$\Box$\end{proof}

\begin{proof}{\it of Theorem \ref{thm2}.} 
Let $\varepsilon >0$ be fixed. Choose $\gamma>0$ in (\ref{eps}) such that $\epsilon(\gamma) < \varepsilon/2.$ 
By Theorem \ref{thm1} in \cite{f}, the game
${\cal G}^\gamma_c$ has a stationary equilibrium $\pmb{f}^*= (f_1^*,...,f_N^*).$ 
Define $\pmb{\varphi}^* = (\varphi_1^*,...,\varphi_N^*)\in  \Phi^\gamma$ as in the proof of Theorem \ref{thm1}. 
Then we have
\begin{equation}
\label{R3}
J_i^{0,\gamma}(\pmb{\varphi}^*)(x) = \inf_{\phi_i\in  \Phi_i^\gamma}
J_i^{0,\gamma}([\pmb{\varphi_{-i}^*},\phi_i])(x), \quad i\in {\cal N},\ x\in X. 
\end{equation} 
As in Lemma 4.1 in \cite{n1}, we can prove that
\begin{equation}
\label{R4}
\inf_{\phi_i\in \Phi^\gamma_i}
J_i^{0,\gamma}([\pmb{\varphi_{-i}^*},\phi_i])(x)=
\inf_{\phi_i\in  \Phi_i}
J_i^{0,\gamma}([\pmb{\varphi_{-i}^*},\phi_i])(x), \quad i\in {\cal N},\ x\in X. 
\end{equation}
By (\ref{R3}) and (\ref{R4}), we get
$$
J_i^{0,\gamma}(\pmb{\varphi}^*)(x) = \inf_{\phi_i\in  \Phi_i}
J_i^{0,\gamma}([\pmb{\varphi_{-i}^*},\phi_i])(x), \quad i\in {\cal N},\ x\in X. 
$$
This equality and Lemma \ref{l1} imply that
\begin{equation}
\label{R5}
J_i^{0 }(\pmb{\varphi}^*)(x)-\varepsilon \le    \inf_{\phi_i\in   \Phi_i}
J_i^{0 }([\pmb{\varphi_{-i}^*},\phi_i])(x), \quad i\in {\cal N},\ x\in X. 
\end{equation} 
By standard methods in discounted dynamic programming
\cite{bs,n1}, we have
$$
\inf_{\phi_i\in  \Phi_i}
J_i^{0}([\pmb{\varphi_{-i}^*},\phi_i])(x)=
\inf_{\sigma_i\in  \Pi_i}
J_i^{0}([\pmb{\varphi_{-i}^*},\sigma_i])(x), \quad i\in {\cal N},\ x\in X. 
$$
This fact and (\ref{R5}) imply that
$$
J_i^{0 }(\pmb{\varphi}^*)(x)-\varepsilon \le 
\inf_{\sigma_i\in  \Pi_i}
J_i^{0}([\pmb{\varphi_{-i}^*},\sigma_i])(x), \quad i\in {\cal N},\ x\in X,
$$
which completes the proof. \hfill $\Box$ \end{proof}

\begin{remark}
\label{r6}  The proof of Theorem \ref{thm2} 
 is similar to that of Theorem 3.1 in \cite{n1}, 
but it has one important change implying
that the restrictive condition (\ref{intfin}) can be dropped.
 \end{remark}

\section{Young measures and the proof of Theorem \ref{thm3}}
\label{s4}

Let $\vartheta:=(\eta+\mu)/2.$
A function $c:\mathbb{K}\to\mathbb{R}$ is Carath{\'e}odory, if it is 
product measurable on $\mathbb{K}$, 
$c(x,\cdot)$ is continuous on $A$ for each $x\in X$  and 
$$\int_X\max\limits_{\pmb{a}\in A}|c(x,\pmb{a})|\vartheta(dx)<\infty.$$
Let $\Psi^\vartheta$ be the space of all $\vartheta$-equivalence classes of 
functions in $\Psi.$ The elements of $\Psi^\vartheta$ are called Young measures.
Note that the expected discounted cost functionals are well-defined for all elements of $\Psi^\vartheta.$  
More precisely, if $\psi^\vartheta \in \Psi^\vartheta,$ then
$J^\ell_i(\psi)$ is the same for all representatives $\psi$ of $\psi^\vartheta$ in $\Psi$ 
and we can understand $J^\ell_i(\psi^\vartheta)$ as $J^\ell_i(\psi).$
We shall identify in notation $\psi^\vartheta$ with its representative $\psi$ 
and omit the superscript $\vartheta.$ 

We assume that the space $\Psi^\vartheta$ is endowed with the weak* topology.  
Since ${\cal F}$ is countably generated, $\Psi^\vartheta$ is metrisable. 
Moreover, since the set $A$ is compact, 
$\Psi^\vartheta$ is a {\it compact convex} subset of a locally convex linear topological space. 
For a detailed discussion of these issues consult with \cite{bal} or Chapter 3 in \cite{fg}. 
Here, we  recall that
$\psi^n \to^* \psi^0$ in $\Psi^\vartheta$  as $n\to\infty$  if and only if for every
Carath{\'e}odory function $c:\mathbb{K}\to \mathbb{R}$, we have
$$ \lim\limits_{n\to\infty}\int_X\int_{A}c(x,\pmb{a})\psi^n(d\pmb{a}|x)\vartheta(dx)=
\int_X\int_{A}c(x,\pmb{a})\psi^0(d\pmb{a}|x)\vartheta(dx).$$\\

We now choose   $\varepsilon_n>0$ such that $\varepsilon_n \searrow 0$ as $n\to\infty $ and define  
\begin{equation}
\label{gn}
\gamma_n:=\frac{\varepsilon_n(1-\alpha)}{(1-\alpha+b\alpha)}.
\end{equation}
In other words, $\epsilon(\gamma_n)=\varepsilon_n$ or $\gamma_n=\epsilon^{-1}(\varepsilon_n).$
From Theorem \ref{thm1},  it follows that there exists a profile of stationary piecewise constant strategies 
$$\pmb{\psi^n}=(\psi_{1}^n,\ldots,\psi_{N}^n)\in \Phi^{\gamma_n},$$ 
which comprises  an approximate equilibrium in the $CSG$
for $\varepsilon_n$ and at the same time an equilibrium in the
corresponding constrained game ${\cal G}^{\gamma_n}$ with  $\gamma_n$ as in (\ref{gn}) 
and the constraint constants $\kappa^\ell_i+\frac{\varepsilon_n}2.$ 
 
Define the product measure on $A,$ for every $x\in X$ and $n\in \mathbb{N}$  as 
\begin{equation}\label{gi}
\psi^n(\cdot|x):=\psi_{1}^n(\cdot|x)\otimes\ldots\otimes\psi_{N}^n(\cdot|x).
\end{equation}
We use   $\psi^n$  to denote the class in $\Psi^\vartheta$ 
whose representative  is this transition probability. 
Without loss of generality, we may 
assume that $\psi^n$ converges in the weak* topology to some 
$\psi^* \in  \Psi^\vartheta$  as $n\to\infty.$   

We shall need the following results. The first one  is a consequence of Lemma \ref{l1} and the fact that 
$J_i^{\ell,\gamma_n}(\pmb{\psi^n})=  J_i^{\ell,\gamma_n}(\psi^n)$ and $J_i^{\ell}(\pmb{\psi^n})=  J_i^{\ell}(\psi^n).$

\begin{lemma}
\label{l1a}  For each $i\in {\cal N}$ and $\ell\in {\cal L}_0,$ we have 
$$
\sup_{\psi  \in \Psi}|J_i^\ell(\psi)- 
 J_i^{\ell,\gamma_n}(\psi)|\le \varepsilon_n, $$
$$
\sup_{\psi_{-i} \in \Psi_{-i}}\sup_{\pi_i\in\Pi_i}|J_i^\ell([\psi_{-i},\pi_i])- 
 J_i^{\ell,\gamma_n}([\psi_{-i},\pi_i])|\le \varepsilon_n, 
$$
where $\gamma_n$ is as in (\ref{gn}).
\end{lemma}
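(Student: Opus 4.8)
The plan is to reduce both inequalities to the estimate already contained in Lemma \ref{l1}, exploiting the fact that its proof never uses the product structure of a multi-strategy. Both $J_i^\ell(\psi)$ and $J_i^{\ell,\gamma_n}(\psi)$ are built from the same Ionescu--Tulcea measure on $H^\infty$; the only data that differ between the game ${\cal G}$ and the approximating game ${\cal G}^{\gamma_n}$ are the per-stage cost, $c_i^\ell$ versus $c_i^{\ell,\gamma_n}$, and the kernel, $p$ versus $p^{\gamma_n}$. These two discrepancies are controlled uniformly in the state by (\ref{eps1}) and (\ref{eps2}) with $\gamma=\gamma_n$, and since $\epsilon(\gamma_n)=\varepsilon_n$ by (\ref{gn}), the target bound is exactly the bound of Lemma \ref{l1}.

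For the first inequality I would first record the elementary identity that when $\psi$ is the product $\varphi_1\otimes\cdots\otimes\varphi_N$ of a stationary multi-strategy $\pmb{\varphi}=(\varphi_1,\ldots,\varphi_N)\in\Phi$, one has $J_i^\ell(\psi)=J_i^\ell(\pmb{\varphi})=\int_X J_i^\ell(\pmb{\varphi})(x)\,\eta(dx)$, and likewise in ${\cal G}^{\gamma_n}$; integrating the inequality of Lemma \ref{l1} against $\eta$ then yields the claim for such $\psi$. The point is that the same integrated estimate holds for an arbitrary $\psi\in\Psi$, because the argument behind Lemma \ref{l1} (Lemma 4.4 in \cite{n1}) localises stage by stage and depends only on the joint action distribution used at each state, not on whether that distribution factorises across the players. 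Concretely, I would split $J_i^\ell(\psi)-J_i^{\ell,\gamma_n}(\psi)$ into a cost term and a transition term: the cost term is bounded, using (\ref{eps1}), by $(1-\alpha)\sum_{t\ge 1}\alpha^{t-1}\gamma_n=\gamma_n$, while the transition term is bounded, via the total-variation estimate (\ref{eps2}) together with $|c_i^\ell|\le b$ and a telescoping sum over the stages, by $b\alpha\gamma_n/(1-\alpha)$. Adding the two contributions gives $\gamma_n(1-\alpha+b\alpha)/(1-\alpha)=\epsilon(\gamma_n)=\varepsilon_n$, uniformly in $\psi\in\Psi$.

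For the second inequality I would apply the very same decomposition to the profile $[\psi_{-i},\pi_i]$. Here the $N-1$ players other than $i$ draw their actions from the marginal $\psi_{-i}(\cdot|x^t)$ while player $i$ uses an arbitrary history-dependent $\pi_i$, so at each stage the joint action is governed by a well-defined, generally non-product, distribution on $A$. This is again just a particular action behaviour driving the chain, and the bounds (\ref{eps1})--(\ref{eps2}) apply verbatim. Hence $|J_i^\ell([\psi_{-i},\pi_i])-J_i^{\ell,\gamma_n}([\psi_{-i},\pi_i])|$ is bounded by the same $\varepsilon_n$, uniformly over $\psi_{-i}\in\Psi_{-i}$ and $\pi_i\in\Pi_i$.

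The only delicate point, and the one I would check carefully, is this passage from product multi-strategies to correlated and to deviation profiles: one must confirm that the estimate of Lemma \ref{l1}, as derived from Lemma 4.4 in \cite{n1}, uses solely the uniform bounds on $\|c_i^\ell(x,\cdot)-c_i^{\ell,\gamma_n}(x,\cdot)\|$ and $\|\delta(x,\cdot,\cdot)-\delta^{\gamma_n}(x,\cdot,\cdot)\|_c$ and never the independence of the players' randomisations. Once this is verified, both bounds follow immediately, with no analytic input beyond (\ref{eps1}), (\ref{eps2}) and the choice (\ref{gn}) of $\gamma_n$.
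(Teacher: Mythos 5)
Your proof is correct and follows essentially the same route as the paper, which simply declares the lemma to be a consequence of Lemma \ref{l1} (i.e.\ of the estimate from Lemma 4.4 in \cite{n1}) together with the identification of product stationary multi-strategies with their induced correlated strategies. Your explicit decomposition into a cost term ($\le\gamma_n$) and a telescoped transition term ($\le b\alpha\gamma_n/(1-\alpha)$), and your observation that this argument uses only the uniform bounds (\ref{eps1})--(\ref{eps2}) and never the independence of the players' randomisations, is precisely the detail the paper leaves implicit.
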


\begin{lemma}
\label{l2}   If     $n\to\infty,$   then for any $\ell\in {\cal L}_0$\\
(a) $J_i^{\ell,\gamma_n}(\psi^n) \to J_i^{\ell}(\psi^*),$  \\
(b) $J_i^{\ell,\gamma_n}([\psi_{-i}^n,\phi_i]) \to J_i^{\ell}([\psi_{-i}^*,\phi_i])$
for any $\phi_i\in\Phi_i.$
\end{lemma}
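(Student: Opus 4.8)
The plan is to deduce Lemma \ref{l2} from Lemma \ref{l1a} together with the sequential continuity of the map $\psi\mapsto J_i^\ell(\psi)$ on the compact space $\Psi^\vartheta$ in the weak* topology. First I would use Lemma \ref{l1a} to replace the approximate costs by the true ones: since $|J_i^{\ell,\gamma_n}(\psi^n)-J_i^\ell(\psi^n)|\le\varepsilon_n\to0$ and $|J_i^{\ell,\gamma_n}([\psi_{-i}^n,\phi_i])-J_i^\ell([\psi_{-i}^n,\phi_i])|\le\varepsilon_n\to0$, claims (a) and (b) reduce to $J_i^\ell(\psi^n)\to J_i^\ell(\psi^*)$ and $J_i^\ell([\psi_{-i}^n,\phi_i])\to J_i^\ell([\psi_{-i}^*,\phi_i])$, respectively. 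For (b) I would note that $[\psi_{-i}^n,\phi_i]$ is the correlated strategy $\psi_{-i}^n\otimes\phi_i$, that $\psi^n\to^*\psi^*$ forces the marginals $\psi_{-i}^n\to^*\psi_{-i}^*$, and that fixing $\phi_i$ preserves weak* convergence, since integrating a Carath\'eodory function on $\mathbb K$ against $\phi_i(\cdot|x)$ in the $a_i$ variable yields a Carath\'eodory function on $X\times A_{-i}$; hence $\psi_{-i}^n\otimes\phi_i\to^*\psi_{-i}^*\otimes\phi_i$. Thus (b) is just the weak* continuity of $J_i^\ell$ applied to this sequence, and it suffices to establish that continuity.

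To prove $J_i^\ell(\psi^n)\to J_i^\ell(\psi^*)$ whenever $\psi^n\to^*\psi^*$, I would expand the cost into a Neumann series. Writing $(Pv)(x,\pmb a)=\int_X v(y)\delta(x,y,\pmb a)\mu(dy)$, $Q_\psi v(x)=\int_A(Pv)(x,\pmb a)\,\psi(d\pmb a|x)$ and $u_0^\psi(x)=\int_A c_i^\ell(x,\pmb a)\,\psi(d\pmb a|x)$, and setting $u_{k+1}^\psi=Q_\psi u_k^\psi$, one has $J_i^\ell(\psi)=(1-\alpha)\sum_{k\ge0}\alpha^k\int_X u_k^\psi\,d\eta$ with $\|u_k^\psi\|_\infty\le b$ for every $k$ (as $Q_\psi$ is non-expansive on $L^\infty$ and $\int_X\delta(x,y,\pmb a)\mu(dy)=1$). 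Since the tail is dominated by $b\sum_{k\ge m}\alpha^k$ uniformly in $\psi$, it is enough to show $\int_X u_k^{\psi^n}\,d\eta\to\int_X u_k^{\psi^*}\,d\eta$ for each fixed $k$. As both $\eta$ and $\mu$ are absolutely continuous with respect to $\vartheta$ with densities bounded by $2$, I would prove by induction the stronger statement that $u_k^{\psi^n}\to u_k^{\psi^*}$ in the weak* topology of $L^\infty(\vartheta)$; testing against $d\eta/d\vartheta\in L^1(\vartheta)$ then gives the term convergence.

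The base case $k=0$ is immediate, because for $f\in L^1(\vartheta)$ the map $(x,\pmb a)\mapsto c_i^\ell(x,\pmb a)f(x)$ is Carath\'eodory. For the inductive step I would split
$$u_{k+1}^{\psi^n}(x)-u_{k+1}^{\psi^*}(x)=\int_A\big[(Pu_k^{\psi^n})(x,\pmb a)-(Pu_k^{\psi^*})(x,\pmb a)\big]\psi^n(d\pmb a|x)+\int_A(Pu_k^{\psi^*})(x,\pmb a)\,[\psi^n-\psi^*](d\pmb a|x).$$
The second summand, tested against any bounded $g$ over $\vartheta$, tends to $0$ because $(x,\pmb a)\mapsto(Pu_k^{\psi^*})(x,\pmb a)$ is Carath\'eodory (boundedness is clear and continuity in $\pmb a$ is exactly Assumption {\bf A2}), so weak* convergence of $\psi^n$ applies. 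The crux is the first summand. Here I would use that, by Assumption {\bf A2}, the map $\pmb a\mapsto\delta(x,\cdot,\pmb a)$ is continuous from the compact set $A$ into $L^1(\mu)$, so $K_x:=\{\delta(x,\cdot,\pmb a):\pmb a\in A\}$ is norm-compact in $L^1(\mu)$; since the inductive hypothesis gives $u_k^{\psi^n}\to u_k^{\psi^*}$ weak* in $L^\infty(\vartheta)$, hence also in $L^\infty(\mu)=(L^1(\mu))^*$ as $\mu\ll\vartheta$ with bounded density, and weak* convergence of the bounded sequence $(u_k^{\psi^n})$ is uniform on compact subsets of the predual, one gets $\sup_{\pmb a\in A}|(Pu_k^{\psi^n})(x,\pmb a)-(Pu_k^{\psi^*})(x,\pmb a)|\to0$ for each $x$, and, being bounded by $2b$, it tends to $0$ in $L^1(\vartheta)$ by dominated convergence. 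Thus, tested against bounded $g$, the difference $u_{k+1}^{\psi^n}-u_{k+1}^{\psi^*}$ integrates to $0$; by the uniform bound $\|u_{k+1}^{\psi^n}\|_\infty\le b$ and density of bounded functions in $L^1(\vartheta)$ this is weak* convergence in $L^\infty(\vartheta)$, closing the induction.

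The main obstacle is precisely this inductive step: $\psi^n$ enters the cost at every stage, so a single use of weak* convergence does not suffice. The device that resolves it is the weak--strong pairing above, in which the norm continuity of the transition density (Assumption {\bf A2}) compactifies the family of density slices in $L^1(\mu)$ and thereby upgrades the weak* convergence of the value iterates into convergence uniform in $\pmb a$, which can then be paired with the next-stage weak* convergence of $\psi^n$. All remaining estimates---the uniform tail bound of the series and the passage from $\vartheta$ to $\eta$ via the bounded density $d\eta/d\vartheta$---are routine.
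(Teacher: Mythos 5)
Your proof is correct, and its skeleton coincides with the paper's: first the triangle inequality together with Lemma \ref{l1a} strips off the $\gamma_n$-approximation (and, for part (b), the identification of $[\psi_{-i}^n,\phi_i]$ with the product Young measure $\psi_{-i}^n\otimes\phi_i$ and the observation that $\psi_{-i}^n\otimes\phi_i\to^*\psi_{-i}^*\otimes\phi_i$), reducing everything to the sequential weak* continuity of $\psi\mapsto J_i^\ell(\psi)$ on $\Psi^\vartheta$. The difference is what happens next: the paper disposes of that continuity in one line by citing Lemma 4.1 in \cite{jn2}, whereas you prove it from scratch via the Neumann-series expansion $J_i^\ell(\psi)=(1-\alpha)\sum_k\alpha^k\eta Q_\psi^k u_0^\psi$, the uniform tail bound, and an induction on $k$ whose crux is the weak--strong pairing: Assumption {\bf A2} makes $\pmb a\mapsto\delta(x,\cdot,\pmb a)$ norm continuous, so the slices $\{\delta(x,\cdot,\pmb a):\pmb a\in A\}$ form a norm-compact subset of ${\cal L}^1(\mu)$, on which the weak* convergence of the bounded iterates $u_k^{\psi^n}$ is uniform; this upgrades the inductive hypothesis to convergence of $Pu_k^{\psi^n}$ uniform in $\pmb a$, which can then be paired with the weak* convergence of $\psi^n$ at the next stage. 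I checked the individual steps (the Carath\'eodory property of the various test integrands, the passage between $L^\infty(\vartheta)$ and $L^\infty(\mu)$ via the bounded density $d\mu/d\vartheta\le 2$, and the closing density argument) and they are sound. What the paper's route buys is brevity; what yours buys is a self-contained argument that makes explicit exactly where the norm continuity of the transition density is used -- it is, in effect, a reproof of the cited lemma adapted to this setting, and could serve as a substitute for the external reference.
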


\begin{proof} For part (a) we first use  the triangle inequality
$$|J_i^{\ell,\gamma_n}(\psi^n) - J_i^{\ell}(\psi^*)|\le |J_i^{\ell,\gamma_n}(\psi^n) - 
J_i^{\ell}(\psi^n)|+ |J_i^{\ell}(\psi^n) - J_i^{\ell}(\psi^*)|.$$
The first term on the right-hand side converges to $0$ by Lemma \ref{l1a} and the definition of $\psi^n,$ 
whereas the convergence to $0$ of the
the second term follows  from Lemma 4.1 in \cite{jn2} and the fact that $|J_i^{\ell}(\cdot)|\le b$ 
for every $i\in{\cal N}$ and $\ell\in{\cal L}_0.$
Part (b) is proved as point (a) by using the Fubini theorem and noting that the elements in 
$\Psi^\vartheta$ induced by $\psi_{-i}^n$ in (\ref{gi})
and $\phi_i$  converge  in the weak* sense to the element of 
$\Psi^\vartheta$ induced by  $\psi_{-i}^*$ and $\phi_i.$   \hfill $\Box$
\end{proof}

Let $i\in{\cal N}.$   Consider a Markov decision process
with player $i$ as a decision maker and the transition probability $$
q^{\gamma_n}(dy|x,a_i)=   \int_{A_{-i}}p^{ \gamma_n}(dy|x,[\pmb{a_{-i}},a_i])
\psi_{-i}^n(d\pmb{a_{-i}}|x), \quad (x,a_i)\in\mathbb{K}_i.
$$
 Let $1_D$ be the indicator of the set $D\subset  X\times A.$
The associated occupation measure, when player $i$ uses a stationary strategy 
$\varphi_i\in\Phi_i$   is defined as follows
\begin{equation}\label{om}
 \theta^{\gamma_n}_{\varphi_i}(B\times C)=
(1-\alpha)\sum_{t=1}^\infty\alpha^{t-1} {\cal E}_\eta^{\varphi_i} 1_{B\times C}(x^t,a_i^t)
\end{equation}
for any $B\in{\cal F}$ and a Borel set $C$ in $A_i.$ We use the symbol
${\cal E}^{\varphi_i}_\eta$ to denote the expectation operator corresponding to the unique probability measure
 induced by 
$\varphi_i\in\Phi_i$, the initial distribution $\eta$ and the transition probability $q^{\gamma_n}.$
For $\ell \in {\cal L}_0,$ $x\in X$ and $a_i\in A_i,$ set 
$$
c_i^{\ell,  \gamma_n }(x,a_i):= \int_{A_{-i}}c_i^{\ell,\gamma_n}(x,[\pmb{a_{-i}},a_i])
\psi_{-i}^n(d\pmb{a_{-i}}|x).$$

\begin{proof} {\it of Theorem \ref{thm3}.} Observe that Assumption {\bf A4} implies {\bf A3}. 
We consider the weak* limit $\psi^* 
\in \Psi^\vartheta$ mentioned above and denote its representative in $\Psi$ by the same letter. 

We shall show that   $\psi^*$   is a weak correlated equilibrium.
By Theorem \ref{thm1}, $J^\ell_i(\pmb{\psi^n}) =J^\ell_i(\psi^n) \le \kappa^\ell_i +\varepsilon_n$ for all
$i\in {\cal N}$ and $\ell \in {\cal L}.$  Using Lemma \ref{l2}(a), we conclude that
$$J^\ell_i(\psi^*)=\lim_{n\to\infty}J^\ell_i(\psi^n) \le \kappa^\ell_i,\quad i\in {\cal N},\ \ell \in {\cal L},$$
i.e., $\psi^*$ is feasible. 

Take (if possible)  any feasible strategy in the $CSG$ for player $i\in {\cal N}$, i.e.,  $ \pi_i\in\Pi_i$ such that
$$J_i^{\ell} ([\psi_{-i}^*,\pi_i])\le \kappa_i^\ell\quad\mbox{for all}\quad \ell\in{\cal L}.$$
By Remark  \ref{rS}  that there exists a strategy 
$\phi_i\in\Phi_i$ such that
$$ J^\ell_i([\psi_{-i}^*,\pi_i])=J^\ell_i([\psi_{-i}^*,\phi_i])\quad\mbox{for all } \ell\in{\cal L}_0.$$

$1^\circ$ Assume first that 
\begin{equation}
\label{case1}
J^\ell_i([\psi_{-i}^*,\pi_i])=J_i^{\ell} ([\psi_{-i}^*,\phi_i])< 
\kappa_i^\ell\quad\mbox{for all}\quad \ell\in{\cal L}.
\end{equation}
From this inequality and Lemma \ref{l2}(b), we infer that there exists $N_1\in\mathbb{N}$ such that
$$J_i^{\ell,\gamma_n} ([\psi_{-i}^n,\phi_i])< \kappa_i^\ell\quad\mbox{for all}
\quad \ell\in{\cal L}\quad\mbox{and}\quad n\ge N_1.$$
For every $n\ge N_1$  and Lemma \ref{A1} 
in Appendix we conclude  the existence 
of a piecewise constant Markov strategy $\overline{\pi}_i$ (that may depend on $n$)
such that
$$J_i^{\ell,\gamma_n} ([\psi_{-i}^n,\phi_i])=J_i^{\ell,\gamma_n} ([\psi_{-i}^n,\overline{\pi}_i])
\quad\mbox{for all}\quad \ell\in{\cal L}_0.$$
Hence, it must hold 
$$J_i^{0,\gamma_n}(\psi^n)\le J_i^{0,\gamma_n}([\psi^n_{-i},\overline{\pi}_i])=
J_i^{0,\gamma_n} ([\psi^n_{-i},\phi_i]).$$
In other words, for every $n\ge N_1$ we have 
$$J_i^{0,\gamma_n}(\psi^n)\le J_i^{0,\gamma_n} ([\psi_{-i}^n,\phi_i]).$$
Letting $n\to\infty$ and making use of Lemma \ref{l2}, we infer 
$$
J_i^{0}(\psi^*)\le J_i^{0} ([\psi_{-i}^*,\phi_i])=J^0_i([\psi_{-i}^*,\pi_i])
$$
for any feasible strategy $\pi_i\in\Pi_i $ such that (\ref{case1}) holds. \\  

$2^\circ$ Assume now that there is   player $i\in {\cal N}$ and an index $\ell_0\in{\cal L}$ such that 
\begin{equation}
\label{case2}
J^{\ell_0}_i([\psi_{-i}^*,\pi_i])=J_i^{\ell_0} ([\psi_{-i}^*,\phi_i])= \kappa_i^{\ell_0}.
\end{equation}
From  the proof of  Lemma \ref{l2}(b) it follows that  there exists a sequence $e_n\ \to 0$ as $n\to\infty,$ $e_n>0,$ such that 
$$
J_i^{\ell}([\psi_{-i}^n,\phi_i])\le J_i^{\ell} ([\psi_{-i}^*,\phi_i])+e_n\le \kappa_i^\ell +e_n
\qquad\mbox{ for all}\quad \ell\in{\cal L}.
$$
By Remark \ref{remz},  we can find $\zeta>0$ such that for every $n\in\mathbb{N}$ 
there exists a strategy $\sigma^n_i\in\Phi_i$  
such that 
$$
J_i^{\ell} ([\psi_{-i}^n,\sigma^n_i])< \kappa_i^\ell -\zeta \quad\mbox{for all}\quad \ell\in{\cal L}.
$$
Hence, by Lemma \ref{l1a}, we conclude
$$
J_i^{\ell,\gamma_n} ([\psi_{-i}^n,\phi_i])-\varepsilon_n\le J_i^{\ell} 
([\psi_{-i}^n,\phi_i])\le \kappa_i^\ell+e_n
 \quad\mbox{for all}\quad \ell\in{\cal L}
$$
and 
$$
J_i^{\ell,\gamma_n} ([\psi_{-i}^n,\sigma^n_i])-\varepsilon_n\le 
J_i^{\ell} ([\psi_{-i}^n,\sigma^n_i])< \kappa_i^\ell-\zeta
 \quad\mbox{for all}\quad \ell\in{\cal L}.
$$
Let $N_2\in\mathbb{N}$ be such that
$\varepsilon_{N_2}<\zeta.$ For $n\ge N_2$ set
$$\xi_n:=\frac{\varepsilon_n+e_n}{ \zeta+e_n}$$
and observe that $\xi_n\to 0$ as $n\to\infty$ and $\xi_n\in(0,1).$
Let $\theta_{\phi_i}^{\gamma_n}$ and $ \theta_{\sigma^n_i}^{\gamma_n}$ be two occupation measures 
defined as in (\ref{om}). By Proposition 3.9 in \cite{dpr2},
we  define a sequence of  occupation  measures  as follows
$$ \theta^n:=\xi_n\theta_{\sigma^n_i}^{\gamma_n}+(1-\xi_n)\theta_{\phi_i}^{\gamma_n}.$$
Then, for  all $\ell\in{\cal L}_0$ it holds
\begin{equation}\label{aa3}
\int_{X\times A_i} c^{\ell,\gamma_n}_i(x,a_i)\theta^n(dx\times da_i)
= \xi_nJ_i^{\ell,\gamma_n} ([\psi_{-i}^n,\sigma^n_i]) +
(1-\xi_{n}) J_i^{\ell,\gamma_n} ([\psi_{-i}^n,\phi_i]).\end{equation}
Hence,  for $n\ge N_2$ and all $\ell\in{\cal L}$, from (\ref{aa3}), we have
\begin{eqnarray}\label{aa1} \nonumber
\int_{X\times A_i} c^{\ell,\gamma_n}_i(x,a_i)\theta^n(dx\times da_i)
&\le&  \xi_n (\kappa_i^\ell+\varepsilon_n -\zeta)+(1-\xi_n)( \kappa_i^\ell +\varepsilon_n+e_n)\\&=&
-\xi_n(e_n+\zeta)+\kappa_i^\ell+\varepsilon_n +e_n \le\kappa_i^\ell<\kappa_i^\ell+\frac{\varepsilon_n}2.
\end{eqnarray}
By Lemma 2.3 in \cite{dpr2} or Theorem 3.2 in \cite{fg} for every $n\ge N_2,$ 
there exists a stationary strategy $\chi^n_i\in \Phi_i$ such that
$\theta^n$ can be written  as in (\ref{om}) with ${\cal E}_\eta^{\varphi_i} $ 
replaced by  ${\cal E}_\eta^{\chi^n_i}.$
In other words
$ 
\theta^n= \theta^{\gamma_n}_{\chi^n_i}.$ 
Therefore, for all $\ell\in{\cal L}_0,$ we obtain
\begin{equation}\label{aa2}  \int_{X\times A_i} c^{\ell,\gamma_n}_i(x,a_i)\theta^n(dx\times da_i)=
J_i^{\ell,\gamma_n} ([\psi_{-i}^n,\chi^n_i]).
\end{equation}
 By Lemma \ref{A1} in Appendix for every $n\in\mathbb{N}$
 there exists a piecewise constant Markov strategy 
 $\overline{\pi}^n_i$  such that
$$J_i^{\ell,\gamma_n} ([\psi_{-i}^n,\chi^n_i])=J_i^{\ell,\gamma_n} ([\psi_{-i}^n,\overline{\pi}^n_i])
\quad\mbox{for all}\quad \ell\in{\cal L}_0.$$
By (\ref{aa1}) and (\ref{aa2})
$$J_i^{\ell,\gamma_n} ([\psi_{-i}^n,\chi^n_i])\le \kappa_i^\ell<\kappa_i^\ell+\frac{\varepsilon_n}2
\qquad\mbox{for all}\quad \ell\in{\cal L}.$$
Hence, it must hold 
\begin{equation}\label{a4}
J_i^{0,\gamma_n}(\psi^n)\le J_i^{0,\gamma_n}([\psi^n_{-i},\overline{\pi}^n_i])
=J_i^{0,\gamma_n} ([\psi^n_{-i},\chi^n_i]).\end{equation}
We know that
$$
J_i^{\ell,\gamma_n} ([\psi_{-i}^n,\chi^n_i])=
\xi_nJ_i^{\ell,\gamma_n} ([\psi_{-i}^n,\sigma^n_i])+ (1-\xi_n)
J_i^{\ell,\gamma_n} ([\psi_{-i}^n, \phi_i]).
$$
Therefore, by  Lemma \ref{l2}(b) and   (\ref{aa2}), we get
$$\lim_{n\to \infty} J_i^{\ell,\gamma_n} ([\psi_{-i}^n,\chi^n_i])= J_i^{\ell} ([\psi_{-i}^*,\phi_i])$$
for all $\ell\in{\cal L}_0.$  This fact, (\ref{a4}) and Lemma \ref{l2}(a) yield that
$$
J_i^{0}(\psi^*)\le J_i^{0} ([\psi_{-i}^*,\phi_i])=J^0_i([\psi_{-i}^*,\pi_i])
$$
for any feasible strategy $\pi_i\in\Pi_i$ for which (\ref{case2}) holds. 
\hfill $\Box$
\end{proof}

Let $\Psi^\vartheta_i$ be the space of  $\vartheta$-equivalence classes of
strategies in $\Phi_i$ endowed with the weak* topology. Clearly, $\Psi^\vartheta_i$ is 
a compact metric space.  The cost functionals $J^\ell_i(\pmb{\varphi}),$ $\ell \in {\cal L}_0$ and $i\in{\cal N}$, 
are well defined for any profile $\pmb{\varphi}= (\varphi_1,...,\varphi_N)\in
\widehat{\Psi}^\vartheta= \prod_{j\in {\cal N}} \Psi^\vartheta_j.$

\begin{remark}
\label{r8}
From Example 3.16 in \cite{ekm} based on Rademacher's functions, it follows that the 
weak* limit of the sequence of approximate equilibria in Theorem \ref{thm3} need 
not be a stationary equilibrium. The same example can be used to see 
that the cost functionals $J^\ell_i,$ $\ell \in {\cal L}_0$ and $i\in{\cal N}$,
may be discontinuous on $\widehat{\Psi}^\vartheta.$
\end{remark}

Consider the two-person game. It  follows from Lemma \ref{l2} that
$J^\ell_i(\varphi_1,\varphi_2)$ is separately continuous in $\varphi_1$ and $\varphi_2$.
Therefore, the functions
$$
R_1(\varphi_1):=  \min_{\varphi_2\in \Psi_2^\vartheta} 
\max_{\ell\in {\cal L}}\ (
J^\ell_1(\varphi_1,\varphi_2)-\kappa^\ell_1) \quad\mbox{and}\quad
R_2(\varphi_2):=  \min_{\varphi_1\in \Psi_1^\vartheta} 
\max_{\ell\in {\cal L}}\ (J^\ell_2(\varphi_1,\varphi_2)-\kappa^\ell_2)
$$
are upper semicontinuous on $\Psi^\vartheta_1$ and $\Psi^\vartheta_2$, respectively.

\begin{remark}
\label{slater}
Consider a two-person game satisfying the standard Slater condition {\bf A5}. 
Then, it follows  
$$
 R_1(\varphi_1)<0  \quad\mbox{and}\quad R_2(\varphi_2)<0$$
for all $\varphi_1\in \Psi_1^\vartheta$ and  $\varphi_2\in \Psi_2^\vartheta.$ 
Since $R_1$ and $R_2$  are upper semicontinuous on the compact spaces  $\Psi_1^\vartheta$ 
and $\Psi_2^\vartheta$, respectively, we   conclude that
\begin{equation} \label{iinneq}
\max_{\varphi_1\in \Psi_1^\vartheta}  R_1(\varphi_1)<0 \quad\mbox{and}\quad
\max_{\varphi_2\in \Psi_2^\vartheta}  R_2(\varphi_2)<0.
\end{equation} 
Obviously, $\varphi_1$ and $\varphi_2$ in   inequalities 
(\ref{iinneq})  
can be understood as representatives of (denoted by the same letters)  
classes in $\Psi^\vartheta_1$ and $\Psi^\vartheta_2,$ respectively.  Then, it is apparent that {\bf A5} implies {\bf A4} 
for the   considered  two-person game.

Since in the $N$-person $ARAT$ game the cost functionals are continuous
on $\widehat{\Psi}^\vartheta$ with the product topology \cite{dpr2}, 
{\bf A5} implies {\bf A4} in this case.

Finally,we note that in the countable state space case, the weak* topology 
on $\Psi^\vartheta_i$ is actually the topology of point-wise convergence and all
cost functionals $J^\ell_i$ are continuous on   the compact space
$\widehat{\Psi}^\theta$  with the product topology.
Therefore, the standard Slater condition {\bf A4}  
made in the literature for these games,
see \cite{as,ahl,jn3,zhg},  is equivalent to {\bf A5}.
\end{remark}

\section{Non-existence of stationary equilibria in discounted constrained games} \label{s5}

In this section, we consider discounted stochastic games with the given initial state distribution $\eta.$ 
If  $c_i^\ell =0$ and $\kappa_i^\ell =1$ for all
$i\in {\cal N} $ and $\ell\in {\cal L}$, then the game  in this class is 
 trivially constrained  and Assumption {\bf A3} automatically holds.  Our aim is to conclude from \cite{lm} 
that such a game may have no stationary Nash equilibrium. For this, we need the following fact. 

\begin{proposition}
\label{p1}  Let {\bf A1} and {\bf A2} be satisfied and in addition let
$p(\cdot|x,\pmb{a}) \ll \eta$ for all $(x,\pmb{a})\in \mathbb{K}.$  
If $\pmb{\varphi}= (\varphi_1,\ldots,\varphi_N)\in \Phi$ is a stationary Nash equilibrium in 
the discounted stochastic game with the initial state distribution $\eta,$  i.e.,
\begin{equation}
\label{neqm1}
J_i^0(\pmb{\varphi}) \le J^0_i([\pmb{\varphi_{-i}},\pi_i]) 
\end{equation}
for all $i\in {\cal N}$ and $\pi_i\in\Pi_i,$ 
 then   there exists a stationary Nash equilibrium 
$\pmb{\psi}=(\psi_1,...,\psi_N) $  
in the  unconstrained stochastic game for all initial states, i.e.,
\begin{equation}
\label{neqm2}
J_i^0(\pmb{\psi})(x) \le J^0_i([\pmb{\psi_{-i}},\pi_i])(x) 
\end{equation}
for all $i\in {\cal N},$   $\pi_i\in\Pi_i $  and $x\in X.$
Moreover,  $\varphi_i(da_i|x)=\psi_i(da_i|x)$ for $\eta$-a.e. $x\in X $ and for all $i\in {\cal N}.$
\end{proposition}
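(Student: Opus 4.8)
The plan is to use the extra hypothesis $p(\cdot|x,\pmb{a})\ll\eta$ to upgrade the $\eta$-averaged equilibrium inequality (\ref{neqm1}) to the pointwise inequality (\ref{neqm2}), modifying $\pmb{\varphi}$ only on an $\eta$-null set. First I would fix $i\in{\cal N}$ and freeze $\pmb{\varphi_{-i}}$, so that player $i$ faces a discounted Markov decision process whose bounded one-stage cost and transition are obtained from $c_i^0$ and $p$ by integrating out the remaining actions according to $\pmb{\varphi_{-i}}$. Under {\bf A1} and {\bf A2} the map $\pmb{a}\mapsto\int_X v(y)\,p(dy|x,\pmb{a})$ is continuous for every bounded measurable $v$, so standard discounted dynamic programming \cite{bs,n1} provides a bounded measurable value function $v_i(x)=\inf_{\pi_i\in\Pi_i}J_i^0([\pmb{\varphi_{-i}},\pi_i])(x)$ together with an optimal stationary response $\rho_i\in\Phi_i$ satisfying $J_i^0([\pmb{\varphi_{-i}},\rho_i])(x)=v_i(x)$ for every $x\in X$.

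The second step is an averaging argument yielding $\eta$-almost everywhere optimality of $\varphi_i$. Since $\varphi_i$ is one admissible strategy, $J_i^0(\pmb{\varphi})(x)\ge v_i(x)$ for every $x$, whence $\int_X J_i^0(\pmb{\varphi})(x)\,\eta(dx)\ge\int_X v_i(x)\,\eta(dx)$. On the other hand, applying (\ref{neqm1}) with $\pi_i=\rho_i$ gives $J_i^0(\pmb{\varphi})\le J_i^0([\pmb{\varphi_{-i}},\rho_i])=\int_X v_i(x)\,\eta(dx)$. As $J_i^0(\pmb{\varphi})=\int_X J_i^0(\pmb{\varphi})(x)\,\eta(dx)$, the two bounds coincide, so the nonnegative integrand $J_i^0(\pmb{\varphi})(x)-v_i(x)$ integrates to zero and therefore vanishes for $\eta$-a.e. $x$. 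I then set $Z_i:=\{x\in X:\ J_i^0(\pmb{\varphi})(x)>v_i(x)\}$ and $Z:=\bigcup_{i\in{\cal N}}Z_i$; thus $Z\in{\cal F}$ and $\eta(Z)=0$.

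Third, I would make $Z$ dynamically irrelevant using absolute continuity. Since $p(\cdot|x,\pmb{a})\ll\eta$ and $\eta(Z)=0$, we have $p(Z|x,\pmb{a})=0$ for all $(x,\pmb{a})\in\mathbb{K}$, so from any state the process visits $Z$ with probability zero after the first transition. Define $\psi_i:=\varphi_i$ on $X\setminus Z$ for each $i$. For $x\in X\setminus Z$ the trajectory stays in $X\setminus Z$ almost surely, and there $\pmb{\psi_{-i}}$ and $\pmb{\varphi_{-i}}$ coincide; hence the decision process faced by player $i$ against $\pmb{\psi_{-i}}$ agrees, on this almost surely invariant set, with the one against $\pmb{\varphi_{-i}}$. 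Consequently $J_i^0(\pmb{\psi})(x)=J_i^0(\pmb{\varphi})(x)=v_i(x)=\inf_{\pi_i\in\Pi_i}J_i^0([\pmb{\psi_{-i}},\pi_i])(x)$ for $x\in X\setminus Z$, which is exactly (\ref{neqm2}) on $X\setminus Z$.

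It remains to define $\pmb{\psi}$ on the null set $Z$ so that (\ref{neqm2}) holds there as well, and this is the step I expect to be the main obstacle. For $x\in Z$ one transition sends the state into $X\setminus Z$ almost surely, where every player $j$ has the fixed continuation value $v_j$; hence the equilibrium requirement at $x$ collapses to a Nash equilibrium of the static $N$-person game with cost functions $U_j^x(\pmb{a})=(1-\alpha)c_j^0(x,\pmb{a})+\alpha\int_X v_j(y)\,p(dy|x,\pmb{a})$, which by {\bf A1} and {\bf A2} are continuous in $\pmb{a}$ on the compact set $A$, so Glicksberg's theorem yields a mixed equilibrium for each $x\in Z$. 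The delicate point is to choose these equilibria measurably in $x$, so that the resulting maps $x\mapsto\psi_j(\cdot|x)$ define genuine elements of $\Phi_j$; this I would obtain from a measurable selection theorem applied to the jointly measurable, closed-valued equilibrium correspondence on $Z$. With this choice each $\psi_i$ is a best response to $\pmb{\psi_{-i}}$ at every $x\in Z$, establishing (\ref{neqm2}) there too. Finally, since $\pmb{\psi}=\pmb{\varphi}$ on $X\setminus Z$ and $\eta(Z)=0$, we get $\varphi_i(da_i|x)=\psi_i(da_i|x)$ for $\eta$-a.e. $x\in X$ and all $i\in{\cal N}$, which completes the argument.
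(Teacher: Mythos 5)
Your argument is correct and follows essentially the same route as the paper: show that $\pmb{\varphi}$ is optimal for $\eta$-a.e.\ state in the one-shot games whose continuation values are the equilibrium payoffs, repair the $\eta$-null exceptional set by a measurable Nash-equilibrium selection for those one-shot games (the paper handles exactly the measurability point you flag as delicate by citing Lemma 5 of \cite{nr}), and use $p(\cdot|x,\pmb{a})\ll\eta$ to make the modification dynamically irrelevant and verify global optimality. The only real difference is in how the a.e.\ optimality is derived -- you squeeze $J_i^0(\pmb{\varphi})(\cdot)$ between the MDP value function $v_i$ and its $\eta$-integral using a single optimal stationary response, whereas the paper localizes the averaged inequality (\ref{neqm1}) over arbitrary $S\in{\cal F}$ to obtain the pointwise Bellman inequality -- and both variants rest on the same measurable-selection facts, so this is a cosmetic rather than substantive divergence.
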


We start with necessary notation. Let $\pmb{\phi}= (\phi_1,...,\phi_N) \in \Phi.$ Then 
$$\phi(d\pmb{a}|x):=\phi_1(da_1|x)\otimes \phi_2(da_2|x)\otimes\cdots\otimes \phi_N(da_N|x)$$ 
is the product  measure on $A$ determined by
$\phi_i(da_i|x),$ $i=1,2,...,N.$   Recall that by  $\phi_{-i}(d\pmb{a_{-i}}|x)$ 
we denote the projection of  $\phi(d\pmb{a}|x)$
on $A_{-i}.$ We put
$$c_i^0(x,\pmb{\phi}):= \int_A c_i^0(x,\pmb{a})\phi(d\pmb{a}|x)\quad\mbox{and}\quad
p(dy|x,\pmb{\phi}):= \int_A p(dy|x,\pmb{a})\phi(d\pmb{a}|x).$$
If $\sigma_i \in \Phi_i,$ then
$$c_i^0(x,[\pmb{\phi_{-i}},\sigma_i]):= \int_{A_i}\int_{A_{-i}} c_i^0(x,[\pmb{a_{-i}},a_i])
\phi_{-i}(d\pmb{a_{-i}}|x)\sigma_i(da_i|x), $$
$$
p(dy|x,[\pmb{\phi_{-i}},\sigma_i]):= \int_{A_i}\int_{A_{-i}}p(dy|x,
[\pmb{a_{-i}},a_i])\phi_{-i}(d\pmb{a_{-i}}|x)\sigma_i(da_i|x).
$$
If $\nu_i \in \Pr(A_i),$ then
$$c_i^0(x,[\pmb{\phi_{-i}},\nu_i]) := c_i^0(x,[\pmb{\phi_{-i}},\sigma_i]) \quad\mbox{and}\quad  
  p(dy|x,[\pmb{\phi_{-i}},\nu_i]) := p(dy|x,[\pmb{\phi_{-i}},\sigma_i])$$
with $\sigma_i(da_i|x)= \nu_i(da_i)$ for all $x\in X.$

Let $v_i$, $i=1,2,...,N$, be bounded measurable functions on $X.$
For each $x\in X,$ by $\Gamma_x(v_1,...,v_N)$ we denote the one-step $N$-person game, 
where the payoff (cost) function for player $i\in {\cal N} $  is
$$(1-\alpha)c_i^0(x,\pmb{a}) + \alpha\int_X v_i( y)p(dy|x,\pmb{a}),
\quad\mbox{where}\quad  
\pmb{a}=(a_1,...,a_N)\in A.$$ 

\begin{proof}{\it of Proposition \ref{p1}}  
From (\ref{neqm1}), it follows that for each set $S\in {\cal F},$ we have
\begin{eqnarray*}
J_i^0(\pmb{\varphi}) &=&
\int_{  X}
\left((1-\alpha)c^0_i(x,\pmb{\varphi}) +\alpha
\int_XJ_i^0(\pmb{\varphi})(y)p(dy|x,\pmb{\varphi})\right)\eta(dx)\\
&\le&
\int_S\min_{\nu_i\in\Pr(A_i)}\left((1-\alpha)c^0_i(x,[\pmb{\varphi_{-i}},\nu_i]) +\alpha
\int_XJ_i^0(\pmb{\varphi})(y)p(dy|x,[\pmb{\varphi_{-i}},\nu_i])\right)\eta(dx)\\   
&+&\int_{X\setminus S}
\left((1-\alpha)c^0_i(x,\pmb{\varphi}) +\alpha
\int_XJ_i^0(\pmb{\varphi})(y)p(dy|x,\pmb{\varphi})\right)\eta(dx)
\end{eqnarray*}
Hence, for each $S\in {\cal F},$ 
\begin{eqnarray*}
\lefteqn{ \int_{  S}
\left((1-\alpha)c^0_i(x,\pmb{\varphi}) +\alpha
\int_XJ_i^0(\pmb{\varphi})(y)p(dy|x,\pmb{\varphi})\right)\eta(dx) 
\le}\\& &
\int_S\min_{\nu_i\in\Pr(A_i)}\left((1-\alpha)c^0_i(x,[\pmb{\varphi_{-i}},\nu_i]) +\alpha
\int_XJ_i^0(\pmb{\varphi})(y)p(dy|x,[\pmb{\varphi_{-i}},\nu_i])\right)\eta(dx).
\end{eqnarray*}
Thus, for every $i\in {\cal N},$ there exists $S_i\in {\cal F}$ 
such that $\eta(S_i)=1$ and for all $x\in S_i,$ we have
\begin{eqnarray}
\label{neqm3}
\lefteqn{ (1-\alpha)c^0_i(x,\pmb{\varphi}) +\alpha
\int_XJ_i^0(\pmb{\varphi})(y)p(dy|x,\pmb{\varphi})
 \le }\\ && \nonumber  
 \min_{\nu_i\in\Pr(A_i)}\left((1-\alpha)c^0_i(x,[\pmb{\varphi_{-i}},\nu_i]) +\alpha
\int_XJ_i^0(\pmb{\varphi})(y)p(dy|x,[\pmb{\varphi_{-i}},\nu_i])\right).
\end{eqnarray}
Let $\widehat{S}:= S_1\cap S_2\cdots\cap S_N.$ Now consider the game
$\Gamma_x(v_1,...,v_N),$ where $v_i(y)= J_i^0(\pmb{\varphi})(y),$ $y\in X.$  
By Lemma 5 in \cite{nr}, there exists   $\pmb{\phi} \in \Phi$ such that 
$\pmb{\phi} (d\pmb{a}|x) = (\phi_1(da_1|x),...,\phi_N(da_N|x))$ is a Nash equilibrium in the game 
$\Gamma_x(v_1,...,v_N)$ for all  $x\in X\setminus \widehat{S}.$ For every $i\in {\cal N},$ define
$\psi_i(da_i|x):= \varphi_i(da_i|x),$ if $x\in \widehat{S},$ and 
$\psi_i(da_i|x):= \phi_i(da_i|x),$ if $x\in X\setminus \widehat{S}.$
Then, using (\ref{neqm3}), we conclude that
$\pmb{\psi}(d\pmb{a}|x) = (\psi_1(da_1|x),...,\psi_N(da_N|x))$ 
is a Nash equilibrium in the game $\Gamma_x(v_1,...,v_N)$ 
for all $x\in X.$ Define
$v_i^0(y):= v_i(y)= J_i^0(\pmb{\varphi})(y)$ for each $y\in \widehat{S}$ and
$$ v_i^0(y):= 
(1-\alpha)c^0_i(y,\pmb{\psi}) +\alpha
\int_XJ_i^0(\pmb{\varphi})(z)p(dz|y,\pmb{\psi})
$$ 
for each $y\in X\setminus \widehat{S}.$ Then,  $\eta(X\setminus \widehat{S})=0$ and  our assumption
$p(\cdot|x,\pmb{a})\ll \eta(\cdot),$  $(x,\pmb{a})\in \mathbb{K},$  imply that 
$\Gamma_x(v_1^0,...,v_N^0)= \Gamma_x(v_1,...,v_N)$ for all
$x\in X.$   Therefore, for all $x\in X,$  $\psi(d\pmb{a}|x)$ is a Nash equilibrium 
in the game  $\Gamma_x(v_1^0,...,v_N^0)$   and 
$$
v^0_i(x)=(1-\alpha)c^0_i(x,\pmb{\psi}) +\alpha
\int_Xv_i^0(y)(y)p(dy|x,\pmb{\psi}).
$$ 
Using these facts and the Bellman equations for discounted dynamic programming \cite{bs,hll}, 
we conclude that (\ref{neqm2}) holds.
\hfill $\Box$\end{proof}

\begin{remark}
\label{r7}  Levy and McLennan \cite{lm}
gave an example of a discounted stochastic game with no constraints 
having no stationary Nash equilibrium. This is an $8$-person stochastic game
with finite action sets for the players and $X=[0,1]$ as the state space. 
The definitions of payoff functions and transition probabilities in their game are rather complicated 
and are not given here. We only mention that  the transition probabilities are absolutely continuous 
with respect to the probability measure
$\eta_1=(\lambda_1 +\delta_1)/2,$ where $\lambda_1$ is the Lebesgue measure 
on $[0,1]$ and $\delta_1$ is the Dirac measure concentrated at the point $1.$
Assume that $\eta_1$  is the initial state distribution in this game.
If this game had a stationary Nash equilibrium, then
by Proposition \ref{p1}, it would have  a stationary Nash equilibrium for all initial states.  
From Levy \cite{lm},  it follows it  is impossible.\footnote{We thank John Yehuda Levy for pointing out this fact.}
\end{remark}

\section{Remarks on games with unbounded costs}
\label{s6}

Our results can be extended to a class of games with unbounded
cost functions $c_i^{\ell}$ under some uniform integrability condition
introduced in \cite{fjn}. The method for doing this relies on truncations of the  
costs and using an approximation by bounded games. This was done in our paper 
\cite{jn3} in the countable state space case.
In a special situation, described below and inspired by the work of Wessels
\cite{w} on dynamic programming, a reduction to the bounded case  can be obtained
by the well-known data transformation as described in Remark 2.5 in \cite{dpr1} or Section 10 in \cite{fp}.  
Following Wessels \cite{w}, we make the following assumptions.\\
		
\noindent{\bf  Assumption W} \\ 
{\bf (i)}   There exist a measurable function $\omega:X\to [1,\infty)$ and $c_0>0$ such that   
$|c^\ell_i(x,\pmb{a})|\le c_0\omega(x)$   for all $x\in X,$ $\pmb{a}\in A,$ $i\in {\cal N}$ and $\ell\in {\cal L}_0.$\\
{\bf (ii)}    There exists $\beta>1$ such that $\alpha \beta <1$ and
$$\int_X\omega(y)p(dy|x,\pmb{a}) \le \beta\omega(x)$$
for all $x\in X,$ $\pmb{a}\in A.$ \\ 
{\bf (iii)}   If $\pmb{a}^n \to \pmb{a}$ as  $n\to \infty,$ then 
$$
\int_X|\delta(x,y,\pmb{a}^n) - \delta(x,y,\pmb{a})|\omega(y)\mu(dy) \to 0.
$$\\

To describe the equivalent  model with bounded costs we extend the state space $X$ by adding an isolated 
{\it absorbing state} $0^*.$ All the costs at this absorbing state are {\it zero}.
Let
$c_i^{\ell,\omega}(x,\pmb{a}):= \frac{c_i^\ell(x,\pmb{a})}{\omega(x)},$ and
$$p^\omega(B|x,\pmb{a}):= \frac{\int_B \omega(y)p(dy|x,\pmb{a})}{\beta\omega(x)}, \quad B \in {\cal F},\ x\in X,\ \pmb{a}\in A,$$
$$p^\omega (0^*|x,\pmb{a}): = 1 -\frac{\int_X \omega(y)p(dy|x,\pmb{a})}{\beta\omega(x)},\quad x\in X,\ \pmb{a}\in A.$$
Now define the new initial state distribution as 
$$\eta_0(B):= \frac{\int_B\omega(x)\eta(dx)}{\eta\omega},\quad \mbox{where}\quad \eta\omega = \int_X\omega(x)\eta(dx).$$ 
Here, we assume that $\eta\omega<\infty.$ 
Then, we obtain  primitive data for a bounded constrained stochastic game,
in which the discount factor is $\alpha \beta.$   We denote the
expected discounted costs in the bounded game under consideration 
by ${\cal J}^\ell_i(\pmb{\pi}).$ 
It is easy to see that
$$
{\cal J}^\ell_i(\pmb{\pi})= \frac{J^\ell_i(\pmb{\pi})}{\eta\omega },\quad\mbox{for all}\quad i\in {\cal N}, 
\ell\in {\cal L}_0,\ \pmb{\pi}\in \Pi.
$$
Theorems \ref{thm1} and  \ref{thm3}  can be established for the  
bounded game described above with minor modifications. 
For example, one has to define new constraint constants as $\kappa^\ell_i/\eta\omega,$
$i\in {\cal N}, \ell\in {\cal L}.$  Using the above transformation, we can immediately 
deduce similar results for games with unbounded cost functions satisfying Assumption {\bf W}. 

\section{Appendix}
\label{s7}

In this section, we prove a lemma which plays an important role in the proofs of our theorems. 

Let player $i \in {\cal N}$ be fixed. 
We also fix $\gamma>0$, the partition ${\cal P}^\gamma =\{X^\gamma_n: n\in \mathbb{N}_0 \}$ 
of the state space $X,$  the  cost functions $c^{\ell,\gamma}_i$
and the transition function $p^\gamma$ in the game ${\cal G}^\gamma.$ 
We fix $\pmb{\varphi_{-i}}\in  \Phi_{-i}^\gamma=\prod_{j\in {\cal N}\setminus \{i\}}\Phi_j^\gamma.$

A piecewise constant Markov strategy for player $i$ is a sequence $\pi_i= (f^t)_{t\in\mathbb{N}},$ 
where $f^t\in\Phi_i^\gamma$ for all $t\in\mathbb{N}.$ \\

\begin{lemma} \label{A1}  For fixed $\pmb{\varphi}\in \Phi^\gamma$ and  each $\phi_i\in \Phi_i$ there  
exists a  piecewise constant Markov strategy 
$\pi_i= (f^t)_{t\in\mathbb{N}}$  for player $i$ such that  
$$
J_i^{\ell,\gamma}([\pmb{\varphi_{-i}},\phi_i])= 
J_i^{\ell,\gamma}([\pmb{\varphi_{-i}},\pi_i])\quad \mbox{for all}\quad \ell\in{\cal L}_0.
$$
\end{lemma}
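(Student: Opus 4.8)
The plan is to reduce the entire computation to a Markov chain on the countable index set $\mathbb{N}_0$ of partition cells, exploiting the defining feature of ${\cal G}^\gamma$: both the cost functions $c_i^{\ell,\gamma}(x,\cdot)$ and the transition density $\delta^\gamma(x,\cdot,\cdot)$ depend on the state $x$ only through the cell $X_n^\gamma$ containing it, and are affine in player $i$'s mixed action. Since $\pmb{\varphi_{-i}}\in\Phi_{-i}^\gamma$ is piecewise constant, on each cell $X_n^\gamma$ it induces a fixed product measure $\nu_{-i}^n\in\Pr(A_{-i})$. The only feature of player $i$'s behaviour that the cell-dynamics and cell-costs can detect is therefore the cell-averaged law of $\phi_i$, and freezing this average into a piecewise constant action at each stage is exactly how I would construct $\pi_i$.

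Concretely, first I would let $\rho_t$ denote the law of $x^t$ under the profile $[\pmb{\varphi_{-i}},\phi_i]$ in ${\cal G}^\gamma$ with initial distribution $\eta$, and set $p_t(n):=\rho_t(X_n^\gamma)$. For each cell $n$ with $p_t(n)>0$, I define the averaged action measure $\bar\nu_n^t\in\Pr(A_i)$ by setting, for every Borel set $C\subset A_i$,
$$\bar\nu_n^t(C) := \frac{1}{p_t(n)}\int_{X_n^\gamma}\phi_i(C|x)\,\rho_t(dx),$$
with $\bar\nu_n^t$ chosen arbitrarily when $p_t(n)=0$; this is a genuine probability measure by Tonelli's theorem. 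I then put $f^t(\cdot|x):=\bar\nu_n^t$ for $x\in X_n^\gamma$, which lies in $\Phi_i^\gamma$ and is $\cal F$-measurable because the cell index ranges over a countable set, and define $\pi_i:=(f^t)_{t\in\mathbb{N}}$.

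The core step is an induction showing that the cell-occupation masses are unchanged when $\phi_i$ is replaced by $\pi_i$. Writing $p_t'(n)$ for the cell masses under $[\pmb{\varphi_{-i}},\pi_i]$, I claim $p_t'(n)=p_t(n)$ for all $t,n$. The case $t=1$ is immediate since both equal $\eta(X_n^\gamma)$. For the inductive step, the affinity of $\widehat{p}^\gamma(\tau|n,\cdot)$ in player $i$'s action, together with the cell-constancy of $\delta^\gamma$, collapses the within-cell average of $\phi_i$ to $\bar\nu_n^t$ and gives
$$p_{t+1}(\tau)=\sum_{n\in\mathbb{N}_0}p_t(n)\,\widehat{p}^\gamma(\tau\,|\,n,[\nu_{-i}^n,\bar\nu_n^t]);$$
the identity for $p_{t+1}'$ involves $f^t(\cdot|x)=\bar\nu_n^t$ directly, so the two recursions coincide and the hypothesis propagates. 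Note that only the cell masses, not the within-cell conditional laws, need to agree. An identical affine-averaging computation shows that the expected one-stage cost at time $t$ equals $\sum_n p_t(n)\int_{A_i}g_n^\ell(a_i)\,\bar\nu_n^t(da_i)$ under both profiles, where $g_n^\ell(a_i):=\int_{A_{-i}}\widehat{c}_i^{\ell,\gamma}(n,[\pmb{a_{-i}},a_i])\,\nu_{-i}^n(d\pmb{a_{-i}})$. Summing the discounted stream over $t$ (Tonelli, using $|c_i^{\ell,\gamma}|\le b$) yields $J_i^{\ell,\gamma}([\pmb{\varphi_{-i}},\phi_i])=J_i^{\ell,\gamma}([\pmb{\varphi_{-i}},\pi_i])$ for every $\ell\in{\cal L}_0$.

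The only genuinely delicate point is the well-definedness and measurability of the averaging, namely that $\bar\nu_n^t$ is a bona fide transition probability and $f^t$ is measurable; both follow from disintegration together with the countability of the cell index. The time-dependence of $f^t$ is essential rather than incidental, since the within-cell conditional law of $x^t$ (and hence the correct average of $\phi_i$) genuinely varies with $t$, which is precisely why the statement produces a piecewise constant \emph{Markov} strategy and not a stationary one.
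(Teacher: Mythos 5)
Your proof is correct, but it takes a genuinely different route from the paper's. The paper proceeds by induction on the horizon $T$ via the telescoping identity (\ref{receqn}), which involves the value function $I^{\ell}(\phi_i)$ of the stationary strategy through the one-step-lookahead costs $u^{\ell}(x,a_i)=(1-\alpha)c^{\ell}(x,a_i)+\alpha\int_X I^{\ell}(\phi_i)(y)q(dy|x,a_i)$; at each step it invokes Lemma \ref{A3}, whose proof rests on Lemma \ref{A2} (a Carath\'eodory convexity argument): the cell average of the $(L+1)$-vector $(u^0,\dots,u^L)$ under the current state distribution is written as a finite convex combination of point evaluations $\sum_j\beta_j(n)\,v(y_j(n))$, and $f^{t}$ on the cell is taken to be the corresponding finite mixture $\sum_j\beta_j(n)\,\phi_i(\cdot|y_j(n))$. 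You instead take the full barycenter of $\phi_i(\cdot|x)$ over each cell with respect to the time-$t$ state law $\rho_t$ of the \emph{original} profile, and then prove by induction that the marginal state laws under the two profiles coincide, after which the cost equality follows term by term in the discounted series. The two constructions use the same reference measures in the end (your induction shows $\eta Q_{f^1}\cdots Q_{f^{t-1}}=\rho_t$, a fact the paper never needs because its telescoping identity does the bookkeeping), but your $f^t$ matches \emph{every} bounded measurable function of $a_i$ on each cell, not just the $L+1$ functions that matter, which is why you can dispense with Carath\'eodory, with Lemma \ref{A2}, and with the value function entirely; this makes the argument more elementary and yields the stronger conclusion that the whole law of $x^t$ is preserved. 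What the paper's route buys in exchange is that $f^t$ is realized as a finite mixture of the original conditionals $\phi_i(\cdot|y_j(n))$ (a structural feature that can be useful elsewhere) and that one never has to verify that the barycentric kernel is a bona fide transition probability --- though, as you note, that verification is only a Tonelli/monotone-convergence check plus countability of the cell index, so nothing in your argument is actually at risk.
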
 

For a proof we need some auxiliary results. Let $d\in\mathbb{N}.$ 

\begin{lemma} \label{A2}  Assume that $Y\in {\cal F}$ and
$\rho_0$ is a probability measure on $X $  such that $\rho_0(Y)=1.$ 
Let $v=(v_0,...,v_{d-1})$, where every $v_j:X\to \mathbb{R}$ is a bounded  measurable function. 
Then, there exist points $y_0,...,y_{d} \in Y$ and non-negative numbers 
$\beta_0,...,\beta_{d}$ such that $\sum_{j=0}^{d} \beta_j =1$ and 
\begin{equation}
\label{cov}
\int_Y v(x)\rho_0(dx)=  \sum_{j=0}^{d}\beta_j v(y_j).
\end{equation}\end{lemma}

\begin{proof}  Consider the distribution function of $v$ defined by: 
$\zeta_v(B) :=\rho_0(v^{-1}(B))$, where $B$   is any Borel set  in $\mathbb{R}^d.$  
Using Theorem  16.13 on page  229 in \cite{b} and Lemma 3 on page 74 in \cite{fer}, we obtain
$$ \int_Y v(x)\rho_0(dx)= \int_{\mathbb{R}^d}z\zeta_v(z)dz \in  co\{v(y): y\in X\}.$$
Applying Carath{\'e}odory's theorem, we find points  $y_0,...,y_{d}\in Y$ 
and numbers $\beta_0,...,\beta_{d} \ge 0$ such that 
$\sum_{j=0}^{d} \beta_j =1$ and (\ref{cov}) holds. \hfill $\Box$
\end{proof}

We use  ${\cal C}(A_i)$ to denote the space of all real-valued continuous functions on $A_i $ and $\Pr(A_i)$ 
for the space of all probability measures on $A_i.$ 

\begin{lemma} \label{A3} 
Let $\rho$ be a probability measure on $X$. 
For each $\ell\in{\cal L}_0$ assume that $u^\ell:X\times A_i \to\mathbb{R}$ 
is a bounded function such that $u^\ell(x,a_i) = u_n^\ell(a_i)$ for all 
$x\in X^\gamma_n,$ $a_i\in A_i,$ where 
$u^\ell_n \in {\cal C}(A_i)$,  $n\in \mathbb{N}_0.$
Then, for any $\phi_i\in \Phi_i$ there exists $f\in \Phi^\gamma_i$  such that  
\begin{equation}
\label{ruf}
\int_X\int_{A_i} u^\ell(x,a_i)\phi_i(da_i|x)\rho(dx)=
\int_X\int_{A_i} u^\ell(x,a_i)f(da_i|x)\rho(dx) \quad\mbox{for all } \ell\in{\cal L}_0.
\end{equation}\end{lemma}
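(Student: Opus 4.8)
The plan is to reduce the statement to a cell-by-cell construction over the partition ${\cal P}^\gamma=\{X_n^\gamma:n\in\mathbb{N}_0\}$ and then, on each cell, to replace the (generally state-dependent) behaviour of $\phi_i$ by a single probability measure $\nu_n\in\Pr(A_i)$ chosen so that all $L+1$ integrals match simultaneously. The point is that $u^\ell(x,\cdot)=u_n^\ell(\cdot)$ does not depend on $x$ inside $X_n^\gamma$, so if I set $f(da_i|x):=\nu_n(da_i)$ for $x\in X_n^\gamma$, the contribution of $f$ to the right-hand side of (\ref{ruf}) coming from $X_n^\gamma$ is exactly $\rho(X_n^\gamma)\int_{A_i}u_n^\ell\,d\nu_n$. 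Thus it suffices to produce, for each $n$ with $\rho(X_n^\gamma)>0$, a measure $\nu_n$ with
$$\int_{A_i}u_n^\ell(a_i)\,\nu_n(da_i)=\frac{1}{\rho(X_n^\gamma)}\int_{X_n^\gamma}\int_{A_i}u_n^\ell(a_i)\,\phi_i(da_i|x)\,\rho(dx),\qquad \ell\in{\cal L}_0;$$
on cells with $\rho(X_n^\gamma)=0$ I would pick $\nu_n$ arbitrarily, since such cells contribute nothing to either side.

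To build $\nu_n$ I would apply Lemma \ref{A2}. Fix $n$ with $\rho(X_n^\gamma)>0$ and let $\rho_n$ be the normalised restriction of $\rho$ to $Y:=X_n^\gamma$. Consider the bounded measurable vector-valued function $v=(v^\ell)_{\ell\in{\cal L}_0}$ on $X_n^\gamma$ given by $v^\ell(x):=\int_{A_i}u_n^\ell(a_i)\,\phi_i(da_i|x)$, so that $d=L+1$. Lemma \ref{A2} then furnishes points $y_0,\ldots,y_d\in X_n^\gamma$ and weights $\beta_0,\ldots,\beta_d\ge 0$ with $\sum_j\beta_j=1$ and $\int_{X_n^\gamma}v^\ell\,d\rho_n=\sum_j\beta_j v^\ell(y_j)$ for every $\ell$. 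The key step is now to set $\nu_n:=\sum_{j=0}^d\beta_j\,\phi_i(\cdot|y_j)$, which is a genuine element of $\Pr(A_i)$ as a convex combination of the probability measures $\phi_i(\cdot|y_j)$. By linearity $\int_{A_i}u_n^\ell\,d\nu_n=\sum_j\beta_j v^\ell(y_j)$, which is precisely the required cell average. This device---converting the Carath\'eodory convex combination of the \emph{expected-cost vectors} $v^\ell(y_j)$ into a single mixed action by taking the \emph{same} convex combination of the underlying measures $\phi_i(\cdot|y_j)$---is what lets me avoid any separate argument that the barycenter lies in the set $\{(\int_{A_i}u_n^\ell\,d\nu)_{\ell\in{\cal L}_0}:\nu\in\Pr(A_i)\}$ of attainable mean-value vectors.

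Finally I would assemble $f$ by declaring $f(da_i|x):=\nu_n(da_i)$ for $x\in X_n^\gamma$. Measurability of $x\mapsto f(D|x)=\sum_{n}\nu_n(D)\,1_{X_n^\gamma}(x)$ for any Borel $D\subset A_i$ is immediate from measurability of the partition cells, so $f\in\Phi_i^\gamma$. Summing the cell identities over $n\in\mathbb{N}_0$ and using that $\{X_n^\gamma\}$ partitions $X$ then yields (\ref{ruf}); interchanging sum and integral is harmless because the $u^\ell$ are bounded and $\sum_n\rho(X_n^\gamma)=1$, so every series converges absolutely. I expect the only genuine subtlety to be bookkeeping---applying the construction uniformly across the countably many cells and disposing of the $\rho$-null cells---while the substantive content is entirely the single-cell convexity argument driven by Lemma \ref{A2}.
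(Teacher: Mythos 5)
Your proposal is correct and follows essentially the same route as the paper: normalise $\rho$ on each cell $X_n^\gamma$ with positive mass, apply Lemma \ref{A2} with $d=L+1$ to the vector of averaged functions $x\mapsto\int_{A_i}u_n^\ell\,d\phi_i(\cdot|x)$, define $\nu_n$ as the corresponding convex combination of the measures $\phi_i(\cdot|y_j(n))$, handle null cells with an arbitrary fixed measure, and sum over the partition. The only (cosmetic) difference is that you state explicitly which vector-valued function Lemma \ref{A2} is applied to, which the paper leaves slightly implicit.
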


\begin{proof}  Assume first that $\rho(X_n^\gamma)>0$ and define 
$\rho_0(B) =\frac{\rho(B\cap X_n^\gamma)}{\rho(X_n^\gamma) },$ 
$B \in {\cal F}.$ Applying Lemma \ref{A2} with
$d=L+1$ and $v=(u^0,...,u^{L}),$ we infer that  
there exist points $y_0(n),...,y_{L+1}(n)$ in $X_n^\gamma$ 
and $\beta_0(n),...,\beta_{L+1}(n) \ge 0$ such that 
$\sum_{j=0}^{L+1}\beta_j(n)=1$ and
\begin{eqnarray*}
\frac{1}{\rho(X_n^\gamma)}\int_{X_n^\gamma}\int_{A_i}u^\ell(x,a_i)\phi_i(da_i|x)\rho(dx)&=&
\frac{1}{\rho(X_n^\gamma)}\int_{X_n^\gamma}\int_{A_i}u_n^\ell(a_i)\phi_i(da_i|x)\rho(dx)\\ 
&=& \sum_{j=0}^{L+1}\beta_j(n)
\int_{A_i}u^\ell_n(a_i)\phi_i(da_i|y_j(n))\ \mbox{ for all }  \ell\in{\cal L}_0.
\end{eqnarray*}
For each $x\in X_n^\gamma$, define $f(da_i|x) :=\nu_n(da_i),$ where $\nu_n 
\in \Pr(A_i)$ is given as
$$ \nu_n(da_i):= 
\sum_{j=0}^{L+1}\beta_j(n)
 \phi_i(da_i|y_j(n)).
$$
If $\rho(X_n^\gamma)=0,$ then $f(da_i|x)$ is defined for all
$x \in X_n^\gamma$ by  $f(da_i|x)=\nu_n(da_i)$
where $\nu_n$ is any fixed measure in $\Pr(A_i).$
Note that, we have 
$$
\int_{X_n^\gamma}\int_{A_i}u^\ell(x,a_i)\phi_i(da_i|x)\rho(dx)=
  \int_{A_i}u_n^\ell(a_i)\nu_n(da_i )\rho(X_n^\gamma)= 
\int_{X_n^\gamma}\int_{A_i}u^\ell(x,a_i)f(da_i|x)\rho(dx), 
$$
for all $\ell\in{\cal L}_0,\  n\in \mathbb{N}_0.$
Hence,
$$ \sum_{n\in\mathbb{N}_0}
\int_{X_n^\gamma}\int_{A_i}u^\ell(x,a_i)\phi_i(da_i|x)\rho(dx) =
\sum_{n\in\mathbb{N}_0}
\int_{X_n^\gamma}\int_{A_i}u^\ell(x,a_i)f(da_i|x)\rho(dx), 
$$
for all $\ell\in{\cal L}_0,$  which implies (\ref{ruf}). 
\hfill $\Box$ \end{proof}

Since $i\in {\cal N},$ $\gamma>0$, 
$\pmb{\varphi_{-i}}\in \Phi_{-i}^\gamma$ and $\phi_i \in \Phi_i$ 
are fixed, the notation  for the proof of Lemma \ref{A1} can be simplified.

Let $\varphi_{-i}(d\pmb{a_{-i}}|x)$ be the product measure  on
$A_{-i}$ induced by $\varphi_j(da_j|x)$ with $j\not=  i.$ 
For $\ell \in {\cal L}_0,$ $x\in X$ and $a_i\in A_i,$ we put 
$$
c^\ell(x,a_i):= \int_{A_{-i}}c_i^{\ell,\gamma}(x,[\pmb{a_{-i}},a_i])
\varphi_{-i}(d\pmb{a_{-i}}|x),$$
$$
q(dy|x,a_i):=   \int_{A_{-i}}p^{ \gamma}(dy|x,[\pmb{a_{-i}},a_i])
\varphi_{-i}(d\pmb{a_{-i}}|x).
$$
Next, we put
$$
c^\ell_{\phi_i}(x) := \int_{A_i}c^\ell(x,a_i)\phi_i(da_i|x), $$
and, for any  bounded measurable function $w:X\to \mathbb{R},$ 
$$Q_{\phi_i}w(x):= \int_{A_i}w(y)q(dy|x,a_i)\phi_i(da_i|x).  
$$
Similarly, we define
$c^\ell_{g}(x)$ and $Q_{g}w(x)$
for any $g\in \Phi_i^\gamma.$ 
Next, if $g^1,g^2,...,g^T \in \Phi_i^\gamma,$ then
$$\eta w =\int_Xw(x)\eta(dx)\quad\mbox{and}\quad
Q_{g^1}Q_{g^2}\cdots Q_{g^T}w(x) = Q_{g^1}(Q_{g^2}\cdots Q_{g^T}w)(x)$$
and   $$\eta Q_{g^1}Q_{g^2}\cdots Q_{g^T}w :=
\int_X Q_{g^1}Q_{g^2}\cdots Q_{g^T}w(x)\eta(dx).$$
Note that $\eta Q_{g^1}Q_{g^2}\cdots Q_{g^T}$ is the probability distribution of the 
state  $x_{T+1}$ of the process, when player $i$ uses a Markov strategy
$(g^t)_{t\in\mathbb{N}}.$
	
We now introduce new notation for expected costs.
Recalling that $\phi_i\in\Phi_i,$  we put  
$$
I^\ell(\phi_i)(x) := J_i^{\ell,\gamma}([\pmb{\varphi_{-i}},\phi_i])(x)\quad\mbox{and}\quad 
I^{\ell,\eta}(\phi_i) := \int_X I^\ell(\phi_i)(x)\eta(dx),\quad \ell\in {\cal L}_0.
$$
If $\pi_i=(g^t)_{t\in\mathbb{N}}$ is a piecewise constant
strategy for player $i,$ then $I^{\ell,\eta}_T(\pi_i)=
I^{\ell,\eta}_T(g^1,...,g^T)	$ denotes the expected discounted cost in the
$T$-step game ${\cal G}^\gamma$ under assumption that the other players use $\pmb{\varphi_{-i}}.$
Then, the cost over the infinite time horizon is
$$I^{\ell,\eta}(\pi_i)= \lim_{T\to\infty} I^{\ell,\eta}_T(\pi_i).$$

\begin{proof}{\it of Lemma \ref{A1}} We  show by induction that for given $\phi_i\in \Phi_i$ there exists 
$\pi_i=(f^t)_{t\in\mathbb{N}}$ with $f^t\in \Phi_i^\gamma$
for all $t\in \mathbb{N}$ such that for all $T\in\mathbb{N},$ we have 
\begin{equation}
\label{receqn}
I^{\ell,\eta}(\phi_i)= I^{\ell,\eta}(f^1,...,f^T) + 
\alpha^T \eta Q_{f^1}\cdots Q_{f^T}((1-\alpha)c^\ell_{\phi_i}
+\alpha Q_{\phi_i}I^{\ell}(\phi_i)).
\end{equation}
We shall use the following equation
$$ I^\ell(\phi_i)(x) = (1-\alpha)c^\ell_{ \phi_i}(x) + \alpha 
Q_{\phi_i}I^\ell(\phi_i)(x),\quad \mbox{for each}\quad x\in X.$$
Assume that $T=1.$ 	Then,	
\begin{eqnarray*}
 I^{\ell,\eta}(\phi_i) &=&  \eta((1-\alpha) c^\ell_{\phi_i}
+\alpha Q_{\phi_i}I^{\ell}(\phi_i))\\ &=&
\int_X\int_{A_i}\left((1-\alpha)c^\ell(x,a_i) + \alpha\int_X
I^{\ell}(\phi_i)(y)q(dy|x,a_i)\right)\phi_i(da_i|x)\eta(dx).
\end{eqnarray*}
Applying Lemma \ref{A3} with  $\rho=\eta$ and 
\begin{equation}
\label{ugiven}
u^\ell(x,a_i)=(1-\alpha)
c^\ell(x,a_i) + \alpha\int_X
I^{\ell}(\phi_i)(y)q(dy|x,a_i)
\end{equation}
we obtain $f^1\in \Phi_i^\gamma$ such that   
$$
\int_X\int_{A_i}u^\ell(x,a_i)\phi_i(da_i|x)\eta(dx)= 
\int_X\int_{A_i}u^\ell(x,a_i) f^1(da_i|x)\eta(dx) \quad\mbox{ for all}\quad \ell\in{\cal L}_0.$$
Then, we get
\begin{eqnarray*}
 I^{\ell,\eta}(\phi_i) &=& \eta I^\ell(\phi_i) = \eta((1-\alpha) c^\ell_{\phi_i}
+\alpha Q_{\phi_i}I^{\ell}(\phi_i))\\ &=&
\eta  ((1-\alpha) c^\ell_{f^1}
+\alpha   Q_{f^1}I^{\ell}(\phi_i)) = 
\eta  (1-\alpha) c^\ell_{f^1} + \alpha \eta Q_{f^1}I^{\ell}(\phi_i) \\ &=&
I_1^{\ell,\eta}(f^1) + \alpha	\eta  Q_{f^1}((1-\alpha)c^\ell_{\phi_i}+\alpha
Q_{\phi_i} I^{\ell}(\phi_i)) \quad\mbox{fro all}\quad \ell \in{\cal L}_0.
\end{eqnarray*} 
We have obtained (\ref{receqn}) for $T=1.$ Assume now that (\ref{receqn}) 
holds for $T=m$ with some $m\ge 1.$ 
Then we have for some $f^1,...,f^m \in \Phi^\gamma_i$ that 
$$
I^{\ell,\eta}(\phi_i)= I^{\ell,\eta}(f^1,...,f^m) + 
\alpha^m \eta Q_{f^1}\cdots Q_{f^m}((1-\alpha)c^\ell_{\phi_i}
+\alpha Q_{\phi_i}I^{\ell}(\phi_i))
$$
for all $\ell\in{\cal L}_0.$
Applying Lemma \ref{A3} with $u^\ell(x,a_i)$ given by (\ref{ugiven}) 
and $\rho= 		
\eta Q_{f^1}\cdots Q_{f^m},$ we obtain $f^{m+1} \in \Phi_i^\gamma$ such that 
\begin{eqnarray*}	
\lefteqn{\eta Q_{f^1}\cdots Q_{f^m}((1-\alpha)c^\ell_{\phi_i}
+\alpha Q_{\phi_i}I^{\ell}(\phi_i))  = 
\eta Q_{f^1}\cdots Q_{f^m}((1-\alpha)c^\ell_{f^{m+1}}
+\alpha Q_{f^{m+1}}I^{\ell}(\phi_i))}\\ &=&
\eta Q_{f^1}\cdots Q_{f^m} (1-\alpha)c^\ell_{f^{m+1}}+
\alpha \eta Q_{f^1}\cdots Q_{f^m}Q_{f^{m+1}}((1-\alpha)
c^\ell_{\phi_i} + \alpha Q_{\phi_i}
I^{\ell}(\phi_i)).
\end{eqnarray*}
Thus for all $\ell\in{\cal L}_0$ we get
\begin{eqnarray*}
I^{\ell,\eta}(\phi_i) &=& I^{\ell,\eta}(f^1,...,f^m) + 
\alpha^m 
\eta Q_{f^1}\cdots Q_{f^m} (1-\alpha)c^\ell_{f^{m+1}}
\\ &&+ \alpha^{m+1}
\eta Q_{f^1}\cdots Q_{f^m}Q_{f^{m+1}}((1-\alpha)
c^\ell_{\phi_i} + \alpha Q_{\phi_i}
I^{\ell}(\phi_i)) \\ &=&
I^{\ell,\eta}(f^1,...,f^{m+1}) +
\alpha^{m+1}
\eta Q_{f^1}\cdots Q_{f^m}Q_{f^{m+1}}((1-\alpha)
c^\ell_{\phi_i} + \alpha Q_{\phi_i}
I^{\ell}(\phi_i)).
\end{eqnarray*}
This finishes the induction step. Taking the limit in (\ref{receqn}) as $T\to \infty$, we obtain
$$
I^{\ell,\eta}(\phi_i) =  I^{\ell,\eta}(\pi_i) \quad\mbox{with}\quad \pi_i= (f^1,f^2,...)	$$
for all $ \ell \in{\cal L}_0.$		
Going back to our original notation, we deduce that this is the assertion of Lemma \ref{A1}. \hfill $\Box$
\end{proof}

 \noindent
{\bf Acknowledgement. } We thank  two reviewers for very helpful reports. 
We acknowledge the financial support from the National Science Centre, Poland: Grant 2016/23/B/ST/00425.\\

\end{document}